\renewcommand{\emptyset}{\varnothing}
\renewcommand{\mid}{\,|\,}
\newcommand{\real}{\mathbb{R}}
\newcommand{\bsx}{\boldsymbol{x}}
\newcommand{\bsz}{\boldsymbol{z}}
\newcommand{\cx}{\mathcal{X}}
\newcommand{\e}{\mathbb{E}}
\newcommand{\val}{\mathrm{val}}
\newcommand{\var}{\mathrm{var}}
\newcommand{\cov}{\mathrm{cov}}
\newcommand{\cor}{\mathrm{cor}}
\newcommand{\ult}{\underline{\tau}}
\newcommand{\olt}{\overline{\tau}}
\newtheorem{theorem}{Theorem}
\newtheorem{proposition}{Proposition}
\newtheorem{lemma}{Lemma}
\newcommand{\rd}{\,\mathrm{d}}
\newcommand{\mrd}{\mathrm{d}}
\newcommand{\dnorm}{\mathcal{N}}
\newcommand{\dustd}{\mathbf{U}}
\newcommand{\tran}{\mathsf{T}}
\newcommand{\phe}{\phantom{{}={}}}
\renewcommand{\ge}{\geqslant}
\renewcommand{\le}{\leqslant}
\title{On Shapley value for measuring importance 
of dependent inputs}
\date{Orig: October 2016\\[1.15ex]This: March  2017}
\author{Art B. Owen\\Stanford University \and Cl\'ementine Prieur\\ Universit\'e Grenoble Alpes, CNRS, LJK, F-38000 Grenoble, France\\Inria project/team AIRSEA}
\begin{document}
\maketitle

\begin{abstract}
This paper makes the case for using Shapley value to quantify
the importance of random input variables to a function. 
Alternatives based on the ANOVA decomposition
can run into conceptual and computational problems when the 
input variables are dependent.
Our main goal here is to show that Shapley value removes the
conceptual problems. We do this with some simple examples
where Shapley value leads to intuitively reasonable nearly
closed form answers.
\end{abstract}

\section{Introduction}\label{sec:intro}

The importance of inputs to a function is commonly
measured via Sobol' indices.  Those are defined in terms
of the functional analysis of variance (ANOVA) decomposition, which is conventionally defined
with respect to statistically independent inputs.
In applications to computer experiments, it is common that the input
space is constrained to a non-rectangular region, or that
the input variables have some other known form of dependence,
such as a general Gaussian distribution.
When the inputs are described by an empirical distribution
on observational data it is
extremely rare that the variables are statistically independent.
Even designed experiments avoid having independent inputs
(i.e., a Cartesian product of input levels) when the dimension
is moderately large \citep{wu2011experiments}.

A common way to address dependence is to build on work by
\cite{ston:1994} and \cite{hook:2007} who define an ANOVA for
dependent inputs and then define variable importance through
that generalization of ANOVA.  This is the method taken by
\cite{chas:gamb:prie:2012} for computer experiments.

The dependent-variable ANOVA leads to importance measures 
with two conceptual problems:
\begin{compactenum}[\quad1)]
\item the needed ANOVA is only defined when the random 
$\bsx$ has a distribution with a density (or mass function)
uniformly bounded below by a positive constant times
another density/mass function that has independent
margins, and 
\item  the resulting importance of a variable can  be negative
\citep{chas:gamb:prie:2015}.
\end{compactenum}

The first condition is very problematic.  
It fails even for Gaussian $\bsx$ with 
nonzero correlation.  It fails for inputs 
constrained to a simplex.  It fails when 
the empirical distribution of say $(x_{i1}, x_{i2})$
is such that some input combinations are 
never observed or, by definition, cannot possibly be observed.

The second condition is also conceptually problematic.
A variable on which the function does not
depend at all will get importance zero and thus be more
important than one that the function truly does depend on
in a way that gave it negative importance.

The Shapley value, from economics, provides an alternative 
way to define variable importance.   As we describe below,
Shapley value provides a way to attribute the value created
by a team to its individual members.  In our context
the members are individual input variables.
\cite{sobolshapley} derived Shapley value importance for 
independent inputs where the value is variance explained.
The Shapley value of a variable turns out to be bracketed between 
two different Sobol' indices. \cite{song:nels:staum:2016}
recently advocated the use of Shapley value for the case of dependent inputs.  
They report that it is more suitable than Sobol' indices for such problems. 
They use the term ``Shapley effects'' to describe variance based Shapley values. 

The Shapley value provides an importance
measure that avoids the two problems mentioned above: It is available for
any function in $L^2$ of the appropriate domain and it never
gives negative importance.

Although Shapley value solves the conceptual problems,
computational problems remain a serious challenge
\citep{castro2009polynomial}.  The Shapley
value is defined in terms of $2^d-1$ models where $d$ is the
dimension of $\bsx$. 
\cite{song:nels:staum:2016}
presented a Monte Carlo algorithm to estimate
Shapley importance and they apply it to detailed real-world problems.
We address only the conceptual appropriateness of Shapley value
to variable importance, not computational issues.

The outline of this paper is as follows.
Section~\ref{sec:notation} introduces our notation,
defines the functional ANOVA and the Sobol' indices
and presents the dependent-variable ANOVA.
Section~\ref{sec:shapley}
 presents the Shapley value
and its use for variable importance.
From the definition there it is clear that Shapley
value for variance explained will never be negative.
Section~\ref{sec:examples} gives several examples of
simple cases and exceptional corner cases where
we can derive the Shapley value of variable importance
and verify that it is reasonable.
Section~\ref{sec:conc} has brief conclusions.
Section~\ref{sec:proofs} contains the longer proofs.

\section{Notation}\label{sec:notation}
We consider real valued functions $f$ defined on a space $\cx$.
The point $\bsx\in\cx$ has $d$ components, and we write
$\bsx =(x_1,\dots,x_d)$ where $x_j\in\cx_j$.
The individual $\cx_j$ are ordinarily interval subsets of $\real$ but
each of them may be much more general (regions in Euclidean
space, functions on $[0,1]$, or even images, sounds, and video).
What we must assume is that $\bsx$ follows a distribution
$P$ chosen by the user, and that $f(\bsx)$ is then a random
variable with $\e(f(\bsx)^2)<\infty$.

When the components of $\bsx$ are independent,
then Sobol' indices \citep{sobo:1990,sobo:1993} provide  ways to measure 
the importance of individual components of $\bsx$
as well as sets of them.
They are based on a functional ANOVA decomposition.
For details and references on the functional ANOVA,
see~\cite{sobomat}.  

\subsection{ANOVA for independent variables}

Here is a brief summary of the ANOVA to introduce our notation.
For simplicity we will take $f\in L^2[0,1]^d$ with
the argument $\bsx=(x_1,\dots,x_d)$ of $f$ uniformly
distributed on $[0,1]^d$, but the approach
extends straightforwardly to $L^2(\prod_{j=1}^d\cx_j)$
with independent not necessarily uniform $x_j\in\cx_j$.

The set $\{1,2,\dots,d\}$ is written $1{:}d$.
For $u\subseteq 1{:}d$, $|u|$ denotes cardinality
and $-u$ is the complement $\{1\le j\le d\mid j\not\in u\}$.
If $u=(j_1,j_2,\dots,j_{|u|})$ then
$\bsx_u = (x_{j_1},x_{j_2},\dots,x_{j_{|u|}})\in[0,1]^{|u|}$
and $\mrd\bsx_u = \prod_{j\in u}\mrd x_j$.
We use $u+v$ as a shortcut for $u\cup v$ when $u\cap v=\emptyset$,
especially in subscripts.

The ANOVA is defined via functions $f_u\in L^2[0,1]^d$.
These functions satisfy $f(\bsx) = \sum_{u\subseteq 1{:}d}f_u(\bsx)$.
They are defined as follows. First,
$f_\emptyset = \int f(\bsx)\rd\bsx$ and  then
\begin{align}\label{eq:deffu}
f_u(\bsx) = \int\bigl( f(\bsx)-\sum_{v\subsetneq u}f_v(\bsx) \bigr)\rd\bsx_{-u}
\end{align}
for $|u|>0$.
The integral in~\eqref{eq:deffu} is over $[0,1]^{d-|u|}$ and it yields
a function $f_u$ that depends on $\bsx$ only through $\bsx_u$.
The effects $f_u$ are orthogonal: $\int f_u(\bsx)f_v(\bsx)\rd\bsx=0$
when $u\ne v$.

The variance component for the set $u$ is
$\sigma^2_u=\int f_u(\bsx)^2\rd\bsx$ for $|u|>0$
and $\sigma^2_\emptyset=0$.
The variance of $f$ for $\bsx\sim\dustd[0,1]^d$
is $\sigma^2 = \sum_{u\subseteq1{:}d}\sigma_u^2$.

We can define the importance of a set of variables
by  how much of the variance of $f$ is explained by those variables.
The best prediction of $f(\bsx)$ given $\bsx_u$ is
$$ f_{[u]}(\bsx) \equiv \e( f(\bsx)\mid \bsx_u)  = \sum_{v\subseteq u}f_v(\bsx).$$
This prediction explains
\begin{align}\label{eq:varexpl}
\ult^2_u \equiv \sum_{v\subseteq u}\sigma^2_v,
\end{align}
of the variance in $f$. This is one of Sobol's global sensitivity
indices.   His other index is
$$\olt^2_u \equiv \sum_{v\cap u\ne \emptyset}\sigma^2_v = \sigma^2 - \ult^2_{-u}.$$
It is more conventional to use normalized versions
$\ult^2_u/\sigma^2$ and $\olt^2_u/\sigma^2$
but unnormalized ones are simpler for our purposes.
The importance of an individual variable $x_j$ is sometimes defined
through $\ult^2_{\{j\}}$ or $\olt^2_{\{j\}}$.
If $\ult^2_{\{j\}}$ is large then $x_j$ is important and if $\olt^2_{\{j\}}$ is
small then $x_j$ is unimportant.

\subsection{ANOVA for dependent variables}
Now suppose that $f$ is defined on 
$\real^d$ but the argument $\bsx$ 
does not have independent components.
Instead $\bsx$ has distribution $P$.
We could generalize~\eqref{eq:deffu} to the Stone-Hooker ANOVA
\begin{align}\label{eq:deffug}
f_u(\bsx) = \int\bigl( f(\bsx)-\sum_{v\subsetneq u}f_v(\bsx) \bigr)\rd P(\bsx_{-u})
\end{align}
but the result would not generally have orthogonal effects.
To take a basic example, suppose that $P$ is the 
$\dnorm
\left(
\left(\begin{smallmatrix} 0\\0 \end{smallmatrix}\right),
\left(\begin{smallmatrix} 1 &\rho\\ \rho &1 \end{smallmatrix}\right)
\right)
$
distribution for $0<\rho<1$ and let $f(\bsx) = \beta_1x_1+\beta_2x_2$.
Then~\eqref{eq:deffug} yields
$$
f_\emptyset(\bsx)=0,\quad 
f_{\{1\}}(\bsx) = (\beta_1+\beta_2\rho)x_1,\quad
f_{\{2\}}(\bsx) = (\beta_2+\beta_1\rho)x_2
$$
and
$f_{\{1,2\}}(\bsx)=-\beta_2\rho x_1 -\beta_1\rho x_2.
$ 
These effects  are not  orthogonal under $P$ and their
mean squares do not sum to the variance of $f(\bsx)$
for $\bsx\sim P$.

It is however possible to get a decomposition 
$f(\bsx) = \sum_{u\subseteq1{:}d}f_u(\bsx)$
with a hierarchical orthogonality property
\begin{align}\label{eq:hop}
\int f_u(\bsx)  f_v(\bsx) \rd P(\bsx) = 0,\quad \forall  v\subsetneq u.
\end{align}
\cite{chas:gamb:prie:2012}
give conditions under which a decomposition
of $f$ satisfying~\eqref{eq:hop} exists and they use it to define
variable importance.

They assume that the joint distribution $P$ is absolutely continuous
with respect to a product probability measure $\nu$.
That is $P(\mrd \bsx) = p(\bsx)\prod_{j\in1{:}d}\nu_j(\mrd x_j)$
for a density function $p$.
They require also that this density satisfies
\begin{align}\label{eq:noholes}
\exists\, 0<M\le 1,\quad\forall u\subseteq1{:}d,\quad
p(\mrd\bsx) \ge M p(\mrd\bsx_u)p(\mrd\bsx_{-u}),\quad\nu-\text{a.e.}
\end{align}
The joint density is bounded below by a product of two marginal
densities. Among other things, this criterion forbids `holes' in
the support of $P$.  There cannot be regions $R_u\in\real^u$
and $R_{-u}\in\real^{-u}$ with $P(R_u\times R_{-u})=0$
while $\min( P(R_u\times\real^{-u}),P(\real^u\times R_{-u}))>0$.

\subsection{Challenges with dependent variable ANOVA}

The no holes condition~\eqref{eq:noholes} is problematic in many
applications. 
For example, when $\bsx$ is uniformly
distributed on  the triangle
$$
\{ (x_1,x_2)\in[0,1]^2\mid x_1\le x_2\}
$$
then~\eqref{eq:noholes} is violated.
More generally, \cite{gilquin2015sobol} and \cite{kucherenko2016sobol} consider functions on non-rectangular regions defined by linear inequality constraints.
These and similar regions arise in many engineering problems where safety
or costs impose constraints on design parameters.

The simplest distribution with a hole is one with positive probability on the points
$$
\{ (0,0), (0,1), (1,0)\}
$$
and no others. 
Sobol's `pick-freeze' methods 
\citep{sobo:1990,sobo:1993} 
estimate variable importance by freezing the level of
some inputs and then picking new values for the others.  For the example here, setting
$x_1=1$ implies that $x_2$ cannot be changed at all, which is a severe problem
for a pick-freeze approach with dependent inputs.

It is not just probability zero holes that cause a problem for dependent variable ANOVA.
When $\bsx$ is normally distributed with some nonzero correlations,
then~\eqref{eq:noholes} does not hold, and then as we mentioned in
the introduction, the dependent-variable ANOVA is unavailable.
The second problem we mentioned there is that
the dependent variable ANOVA can yield negative estimates of importance.

\section{Shapley value}\label{sec:shapley}

Shapley value is a way to attribute the economic output
of a team to the indivitual members of that team.
In our case, the team will be the set of variables $x_1,x_2,\dots,x_d$.
Given any subset $u\subseteq1{:}d$ of variables, the value that
subset creates on its own is its explanatory power.  A convenient
way to measure explanatory power is via 
\begin{align}\label{eq:uval}
\val(u) = \ult^2_u \equiv \var( \e( f(\bsx)\mid \bsx_u)).
\end{align}
Here, the empty set creates no value and the entire team contributes $\sigma^2$
which we must now partition among the $x_j$.

There are four very compelling properties that an attribution
method should have.
The following list is based on the account in \cite{wint:2002}. 
Let $\val(u)\in\real$ be the value attained by the 
subset $u\subseteq\{1,\dots,d\}\equiv1{:}d$. It is 
always assumed that $\val(\emptyset)=0$, which holds in our variance explained setting.
The values $\phi_j=\phi_j(\val)$
should satisfy these properties:
\begin{compactenum}[\quad1)]
\item (Efficiency) $\sum_{j=1}^d\phi_j = \val(1{:}d)$.
\item (Symmetry) If $\val(u\cup\{i\})=\val(u\cup\{j\})$
for all $u\subseteq 1{:}d-\{i,j\}$, then $\phi_i=\phi_j$. 
\item (Dummy) If $\val(u\cup\{i\})=\val(u)$ for all $u\subseteq1{:}d$,
then $\phi_i=0$. 
\item (Additivity) If $\val$ and $\val'$
have Shapley values $\phi$ and $\phi'$ respectively 
then the game with value $\val +\val' $
has Shapley value $\phi_j+\phi'_j$ for $j\in1{:}d$.
\end{compactenum}
\medskip
\cite{shap:1953} showed that the unique valuation $\phi$
that satisfies these axioms attributes value
\begin{align*}
\phi_j 
&= \frac1d\sum_{u\subseteq -\{j\}}
{d-1\choose |u|}^{-1}
\bigl( \val(u\cup\{j\})-\val(u)\bigr)
\end{align*}
to variable $j$.
Defining the value via~\eqref{eq:uval} we get
\begin{align}
\phi_j 
&= \frac1d\sum_{u\subseteq -\{j\}}
{d-1\choose |u|}^{-1}
(\ult^2_{u+\{j\}}-\ult^2_u).\label{eq:shapfromult}
\end{align} 

From~\eqref{eq:shapfromult} we see that the Shapley value
is defined for any function for which $\var(\e(f(\bsx)\mid \bsx_u))$
is always defined.  The components $\bsx_j$ do not have to be real
valued, though $f(\bsx)$ must be.  Holes in the domain $\cx$ do not
make it impossible to define a Shapley value.
Next, because $\bsx_{u+\{j\}}$ always has at least as much explanatory power
as $\bsx_u$ has, we see that $\phi_j\ge0$. That is, no variable has
a negative Shapley value. 
As a result, the Shapley value addresses the two conceptual problems
mentioned in the introduction.

\cite{song:nels:staum:2016}
show that the same Shapley value
arises if we use $\val(u) = \e( \var(f(\bsx)\mid \bsx_{-u}))$. 
That provides an alternative way to compute Shapley
value.
The Shapley value simplifies for independent inputs.
\begin{theorem}\label{thm:shapleyshare}
Let the ANOVA decomposition of a function with $d$ independent
inputs have variance components $\sigma^2_u$ for $u\subseteq1{:}d$.
If the value of a subset $u$ of variables 
is $\val(u)=\ult^2_u$, then
the Shapley value of variable $j$ is 
$$
\phi_j = \sum_{u\subseteq 1{:}d,\; j\in u} {\sigma^2_u}/{|u|}. 
$$
\end{theorem}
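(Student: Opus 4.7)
The plan is to unroll the Shapley formula~\eqref{eq:shapfromult} using the ANOVA identity $\ult^2_u=\sum_{v\subseteq u}\sigma^2_v$ that holds under independence, swap the order of summation so the outer sum is indexed by ANOVA subsets containing $j$, and then evaluate the resulting combinatorial coefficient in closed form.

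First I would observe that for independent inputs,
$$
\ult^2_{u+\{j\}}-\ult^2_u=\sum_{\substack{v\subseteq u+\{j\}\\ j\in v}}\sigma^2_v=\sum_{w\subseteq u}\sigma^2_{w+\{j\}},
$$
so that substituting into~\eqref{eq:shapfromult} gives
$$
\phi_j=\frac{1}{d}\sum_{u\subseteq-\{j\}}\binom{d-1}{|u|}^{-1}\sum_{w\subseteq u}\sigma^2_{w+\{j\}}.
$$
Next I would exchange the two sums, fixing $w\subseteq-\{j\}$ and letting $u$ range over subsets of $-\{j\}$ with $w\subseteq u$. Writing $u=w\cup t$ with $t\subseteq-\{j\}-w$ and setting $k=|w|$, this yields
$$
\phi_j=\frac{1}{d}\sum_{w\subseteq-\{j\}}\sigma^2_{w+\{j\}}\sum_{\ell=0}^{d-1-k}\binom{d-1-k}{\ell}\binom{d-1}{k+\ell}^{-1}.
$$

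The main step is to show the inner sum equals $d/(k+1)$. I would use the algebraic identity $\binom{d-1-k}{\ell}/\binom{d-1}{k+\ell}=\binom{k+\ell}{k}/\binom{d-1}{k}$, which follows by writing both sides as ratios of factorials. Pulling out $1/\binom{d-1}{k}$ and applying the hockey-stick identity $\sum_{\ell=0}^{d-1-k}\binom{k+\ell}{k}=\binom{d}{k+1}$ leaves the quotient $\binom{d}{k+1}/\binom{d-1}{k}=d/(k+1)$. This is the only nonroutine piece; everything else is bookkeeping.

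Substituting back gives $\phi_j=\sum_{w\subseteq-\{j\}}\sigma^2_{w+\{j\}}/(|w|+1)$, and reindexing by $u=w+\{j\}$ (so $|u|=|w|+1$ and $u$ ranges over subsets of $1{:}d$ containing $j$) delivers the stated expression. The potential obstacle is only the combinatorial identity above, and the factorial-ratio trick together with the hockey-stick lemma dispatches it cleanly.
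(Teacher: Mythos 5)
Your proof is correct, and every step checks out: the telescoping of $\ult^2_{u+\{j\}}-\ult^2_u$ into $\sum_{w\subseteq u}\sigma^2_{w+\{j\}}$ is exactly right under independence, the interchange of summation is legitimate, the factorial-ratio identity $\binom{d-1-k}{\ell}\big/\binom{d-1}{k+\ell}=\binom{k+\ell}{k}\big/\binom{d-1}{k}$ holds, and the hockey-stick step gives $\binom{d}{k+1}\big/\binom{d-1}{k}=d/(k+1)$ as claimed. Note, however, that the paper does not prove Theorem~\ref{thm:shapleyshare} at all; it simply cites an earlier reference, so there is no in-paper argument to match yours against. Two remarks on how your route relates to what is available. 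First, your key combinatorial lemma is literally the identity the authors establish in Section~\ref{sec:proveexprv} when proving Theorem~\ref{thm:exprv}: their computation $\sum_{r=0}^{d-|w|}\binom{d-|w|}{r}\big/\binom{d-1}{r-1+|w|}=d/|w|$ is your inner sum with $|w|=k+1$, proved by the same diagonal-sum (hockey-stick) identity, so your argument is very much in the spirit of the paper's own techniques. Second, there is a slicker axiomatic proof that the cited reference is built on: write $\val=\sum_{v}\val_v$ where $\val_v(u)=\sigma^2_v$ if $v\subseteq u$ and $0$ otherwise; by the additivity axiom $\phi_j(\val)=\sum_v\phi_j(\val_v)$, by the dummy axiom $\phi_j(\val_v)=0$ for $j\notin v$, and by symmetry plus efficiency the members of $v$ split $\sigma^2_v$ equally, giving $\phi_j(\val_v)=\sigma^2_v/|v|$ for $j\in v$. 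That argument avoids all binomial bookkeeping, but it leans on the uniqueness/axioms of Shapley value, whereas your direct computation verifies the formula from the explicit expression~\eqref{eq:shapfromult} alone, which is a perfectly respectable and self-contained alternative.
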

\begin{proof}
\cite{sobolshapley}.
\end{proof}

It follows from Theorem~\ref{thm:shapleyshare} that $\ult^2_{\{j\}} \le \phi_j \le \olt^2_{\{j\}}$.
This is how the Sobol' indices bracket the Shapley value.

\section{Special cases}\label{sec:examples}

Here we consider some special case distributions and
toy functions where we can work out the Shapley value in
a closed or nearly closed form. The point of these examples
is to show that Shapley gives sensible answers in both
regular cases and corner cases.
Because $\sigma^2 = \var(\e(f(\bsx)\mid \bsx_u))
+ \e( \var( f(\bsx)\mid \bsx_u))$ we may use
\begin{align}\label{eq:othertau}
\ult_u^2 = \sigma^2-\e( \var(f(\bsx)\mid \bsx_u)).
\end{align}

\subsection{Linear functions}
Let $f(\bsx) = \beta_0+\sum_{j=1}^d \beta_jx_j$ where $x_j$
are independent with variances $\sigma^2_j$.  It is then easy 
to find that $\phi_j = \beta_j^2\sigma^2_j$. 
If we reparameterize $x_j$ to $cx_j$ for $c\ne 0$ then 
$\beta_j$ becomes $\beta_j/c$ and the importance of this 
variable remains unchanged as it should. 
Dependence among the  $x_j$ complicates the expression for  Shapley effects in linear settings.

Shapley value for linear functions has historically been used
to partition the $R^2$ quantity (proportion of sample variance explained) 
from a regression on $d$ variables
among those $d$ variables.  Taking the value of a subset $u$ of
variables to be $R^2_u$, the $R^2$ value when regressing
a response on predictors $x_j$ for $j\in u$, yields Shapley value
\begin{align}\label{eq:lmg}
\phi_j = \frac1d\sum_{u\subseteq -\{j\}}{d-1\choose |u|}^{-1}(R^2_{u+\{j\}}-R^2_u).
\end{align}
This is the LMG measure of variable importance, named after
the authors of~\cite{lind:mere:gold:1980}.
If we rearrange the $d$ variables into all $d!$ orders, find the improvement in $R^2$
that comes at the moment the $j$'th variable is added to the regression, then
\eqref{eq:lmg} is the average of all those improvements.
The LMG reference is difficult to obtain. \cite{geni:1993}
is another reference, having~\eqref{eq:lmg} as equation (1).
\cite{grom:2007} cites several more references on partitioning $R^2$
in regression and discusses alternative measures and criteria for choosing.
It is clear that~\eqref{eq:lmg} is expensive for large $d$.

Here we consider a population/distribution version of partitioning
variance explained among a set of variables acting linearly.
We suppose that $\bsx \sim\dnorm( \mu,\Sigma)$
where $\Sigma\in\real^{d\times d}$ is a positive semi-definite 
symmetric matrix. The function of interest is
$f(\bsx) = \beta_0+\bsx^\tran\beta$ where $\beta=(\beta_1,\dots,\beta_d)\in\real^d$.
If there is an error term as in a linear regression on noisy data,
then we can let $x_d$ be that error variable with a corresponding $\beta_d=1$.

If $\Sigma$ is not diagonal then the Stone-Hooker ANOVA 
is not available because~\eqref{eq:noholes} does not hold. 
Shapley value gives an interpretable expression for general $d$. 

\begin{theorem}\label{thm:gauslin}
If $f(\bsx)=\beta_0+\beta^\tran\bsx$ for $\bsx\sim\dnorm(\mu,\Sigma)$
where $\Sigma\in\real^{d\times d}$ has full rank, then the Shapley effect
for variable $j$ is 
\begin{align*}\phi_j =
\frac1d\sum_{u\subseteq-j}
{d-1\choose |u|}^{-1}
\dfrac{ \cov\bigl(x_j,\bsx_{-u}^\tran\beta_{-u}\mid\bsx_u\bigr)^2}
{\var(x_j\mid\bsx_u)}.
\end{align*}
\end{theorem}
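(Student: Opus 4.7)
The plan is to exploit the fact that, under joint Gaussianity, the conditional variance of a linear function is a deterministic constant. First I would rewrite each Shapley increment $\ult^2_{u+\{j\}}-\ult^2_u$ appearing in~\eqref{eq:shapfromult} using the identity~\eqref{eq:othertau}. This turns the increment into
$$
\e\bigl(\var(f(\bsx)\mid\bsx_u)\bigr)-\e\bigl(\var(f(\bsx)\mid\bsx_{u+\{j\}})\bigr),
$$
and since $f(\bsx)=\beta_0+\beta^\tran\bsx$ is linear and $\bsx$ is Gaussian, each inner conditional variance does not depend on the conditioning value. The outer expectations therefore drop out, and the increment collapses to $\var(\beta^\tran\bsx\mid\bsx_u)-\var(\beta^\tran\bsx\mid\bsx_{u+\{j\}})$.

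Next I would apply the one-step Gaussian conditioning identity to that difference. Writing $Y=\beta^\tran\bsx$, I form the residuals $\tilde Y=Y-\e(Y\mid\bsx_u)$ and $\tilde x_j=x_j-\e(x_j\mid\bsx_u)$, which are jointly Gaussian and, by joint Gaussianity, independent of $\bsx_u$. Conditioning on $\bsx_{u+\{j\}}$ is the same as conditioning on $(\bsx_u,\tilde x_j)$, so
$$
\var(Y\mid\bsx_{u+\{j\}})=\var(\tilde Y\mid\tilde x_j)=\var(Y\mid\bsx_u)-\frac{\cov(Y,x_j\mid\bsx_u)^2}{\var(x_j\mid\bsx_u)},
$$
where I used $\cov(\tilde Y,\tilde x_j)=\cov(Y,x_j\mid\bsx_u)$ and $\var(\tilde x_j)=\var(x_j\mid\bsx_u)$. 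The full-rank hypothesis on $\Sigma$, via Schur complements, keeps $\var(x_j\mid\bsx_u)$ strictly positive (including in the extreme case $u=-\{j\}$), so the division is always legitimate.

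Finally, $\beta_u^\tran\bsx_u$ is $\bsx_u$-measurable, so $\cov(\beta_u^\tran\bsx_u,x_j\mid\bsx_u)=0$, and therefore
$$
\cov(Y,x_j\mid\bsx_u)=\cov(\beta_{-u}^\tran\bsx_{-u},x_j\mid\bsx_u).
$$
Substituting the resulting increment back into~\eqref{eq:shapfromult} gives the claimed formula. The only real obstacle is the Gaussian step, namely verifying that the orthogonalised residuals are independent of $\bsx_u$ and that the conditional variance of a linear function of a Gaussian vector is deterministic; everything else is bookkeeping using linearity of covariance and the assumed invertibility of $\Sigma$.
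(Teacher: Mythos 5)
Your proof is correct, and while it starts from the same reduction as the paper---using~\eqref{eq:othertau} and the fact that Gaussian conditional variances of a linear function are deterministic to write $\ult^2_{u+\{j\}}-\ult^2_u=\var(\beta^\tran\bsx\mid\bsx_u)-\var(\beta^\tran\bsx\mid\bsx_{u+\{j\}})$---it then diverges in a genuinely useful way. The paper evaluates both conditional variances as explicit Schur complements, inverts the partitioned matrix $\Sigma_{u+\{j\},u+\{j\}}$ blockwise, and grinds through roughly a page of quadratic-form algebra before recognizing the result as a perfect square $\cov(x_j,\bsx_{-u}^\tran\beta_{-u}\mid\bsx_u)^2/\var(x_j\mid\bsx_u)$. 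You instead invoke the one-step Gaussian conditioning identity
$\var(Y\mid\bsx_{u+\{j\}})=\var(Y\mid\bsx_u)-\cov(Y,x_j\mid\bsx_u)^2/\var(x_j\mid\bsx_u)$,
justified by the residual construction ($\tilde Y$, $\tilde x_j$ independent of $\bsx_u$ and generating the same enlarged $\sigma$-algebra), which hands you the summand as a square from the outset; the observation that $\cov(\beta_u^\tran\bsx_u,x_j\mid\bsx_u)=0$ then finishes it. Your route is shorter, makes the nonnegativity of each increment transparent, and explains \emph{why} the paper's algebra must collapse to a ratio of a squared covariance to a variance (it is the quotient property of Schur complements in probabilistic clothing); the paper's route has the minor advantage of producing the intermediate expanded form~\eqref{eq:oldexpr} explicitly. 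Your handling of the degenerate cases ($\var(x_j\mid\bsx_u)>0$ from full rank of $\Sigma$, and the endpoint $u=-\{j\}$) is also sound.
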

\begin{proof}
See Section~\ref{sec:proofgauslin}.
\end{proof}

A variable with $\beta_j=0$ can still have $\phi_j>0$.
For instance if $\Sigma=\bigl(\begin{smallmatrix}0&\rho\\\rho&0\end{smallmatrix}\bigr)$ and
$f(\bsx)=x_1$, then we can find 
directly from~\eqref{eq:shapfromult}
that $\phi_2=\rho^2/2$ and $\phi_1=1-\rho^2/2$.
For $\rho=\pm1$ we already know this by bijection.

The Shapley value works with conditional variances and the Gaussian
distribution makes these very convenient. For non-Gaussian distributions
the conditional covariance of $\bsx_v$ and $\bsx_w$  given $\bsx_u$ may depend on
the specific value of $\bsx_u$, while in the Gaussian case it is simply
$\Sigma_{vw}-\Sigma_{vu}\Sigma_{uu}^{-1}\Sigma_{uw}$ for all $\bsx_u$.

In a related problem, if we define $\val(u)$ to
be $\var(\sum_{j\in u}x_j)$, instead of $\var( \e(\sum_jx_j\mid \bsx_u))$,
then the Shapley value of variable $j$ is
$\phi_j = \cov(x_j,S)$, where $S=\sum_{j\in1{:}d}x_j$.
See \cite{coli:scar:vacc:2016}.
This quantity can be negative.  For instance, if $d=2$, then
$\phi_1=\var(x_1)+\cov(x_1,x_2)$ which is negative when $x_1$
and $x_2$ are negatively correlated and $x_2$ has much greater
variance than $x_1$.

\subsection{Transformations, bijections and  invariance}\label{sec:transbij}
We can generalize the linear
example to independent random variables
that contribute additively: $f(\bsx) = \sum_{j=1}^dg_j(x_j)$.
Then $\phi_j = \var( g_j(x_j))$.  Replacing $x_j$ by a bijection $\tau_j(x_j)$
and adjusting $g_j$ to $g_j\circ \tau_j^{-1}$ leaves $\phi_j$ unchanged.

More generally, suppose that $y = f(\bsx)$ and we transform the 
variables $x_j$ into $z_j$ by bijections:
$z_j = \tau_j(x_j)$, $x_j = \tau_j^{-1}(z_j)$, for $j=1,\dots,d$. 
Now define $f'(\bsz)= f( \tau_1^{-1}(z_1),\dots,\tau_d^{-1}(z_d))$
and let $\phi_j'$ be the Shapley importance of $z_j$ as a predictor 
of $y'=f'(\bsz)$. Because 
$\var( \e( f'(\bsz)\mid\bsz_u)) = \var( \e( f(\bsx)\mid\bsx_u))$,
we find that $\phi_j'=\phi_j$ for $j=1,\dots,d$,
where $\phi_j$ is the Shapley importance of $x_j$ as a predictor of $y$. 
As a result we can apply invertible transformations to any or all of the $x_j$
without changing the Shapley values.


Now lets revisit the linear setting with an extreme example:
$f(x_1,x_2) = 10^6x_1 +x_2$
with $x_1=10^6x_2$ where $x_2$
(and hence $x_1$) has a finite positive variance. 
Because $\partial f/\partial x_1 \gg \partial f/\partial x_2>0$
and $\var(x_1)\gg\var(x_2)$ one might 
expect  $x_1$ to be the more important variable. 
However,  the Shapley formula easily yields $\phi_1=\phi_2$;
these variables are equally important. This is quite reasonable
because $f$ is a function of $x_1$ alone and equally a function
of $x_2$ alone.

More generally, for $d\ge2$, if 
there is a bijection between any two of the $x_j$ then
those two variables have the same Shapley value.
To see this, let
$x_1 = g_1(x_2)$ and $x_2=g_2(x_1)$, both with probability one 
then for any $u\subset 1{:}d$ with $u\cap\{1,2\}=\emptyset$
we have 
$$\e( f(\bsx)\mid \bsx_{u+\{1\}})=\e( f(\bsx)\mid \bsx_{u+\{2\}}).$$
It follows that
$\ult^2_{u+\{1\}} - \ult^2_u=\ult^2_{u+\{2\}} - \ult^2_u$
and therefore $\phi_1=\phi_2$ by the symmetry property
of Shapley value.

To summarize:
\begin{compactenum}[\quad1)]
\item Shapley value is preserved under invertible transformations, and
\item a bijection between variables implies that they have the same Shapley value.
\end{compactenum}


\subsection{Bivariate settings}\label{sec:any2}

When $d=2$ we can get some simpler formulas for the 
importance of the two variables.

\begin{proposition}\label{prop:anyd2}
Let $f(\bsx)$ have finite variance $\sigma^2>0$ for random $\bsx=(x_1,x_2)$. 
Then from~\eqref{eq:shapfromult},
\begin{align}
\frac{\phi_1}{\sigma^2}
&= \frac12\Bigl( 1 + \frac{\var(\e(Y\mid x_1)) - \var(\e(Y\mid x_2)}{\sigma^2}\Bigr) 
\label{eq:dis2VCE}
\\
&=  \frac12\Bigl( 1 + \frac{\e( \var(Y\mid x_2))-\e( \var(Y\mid x_1))}{\sigma^2}\Bigr),\quad\text{and}
\label{eq:dis2ECV}\\
\frac{\phi_1}{\phi_2} & =
\frac{ \var(\e(Y\mid x_1)) +  \e(\var(Y\mid x_2))}{ \var(\e(Y\mid x_2)) +  \e(\var(Y\mid x_1))}.
\label{eq:dis2sym}
\end{align}
\end{proposition}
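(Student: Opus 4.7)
The plan is to instantiate the Shapley formula~\eqref{eq:shapfromult} at $d=2$ directly and then manipulate the result using the law of total variance. Since there are only two variables, the sum in~\eqref{eq:shapfromult} reduces to two terms, so all three claimed identities will fall out of elementary algebra once the correct substitutions are made.

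First I would compute $\phi_1$ explicitly. The sum for $\phi_1$ ranges over $u\subseteq\{2\}$, giving $u=\emptyset$ (contributing $\ult^2_{\{1\}}-\ult^2_\emptyset = \var(\e(Y\mid x_1))$) and $u=\{2\}$ (contributing $\ult^2_{\{1,2\}}-\ult^2_{\{2\}} = \sigma^2 - \var(\e(Y\mid x_2))$). Both binomial weights $\binom{1}{0}^{-1}$ and $\binom{1}{1}^{-1}$ equal $1$, so dividing by $d=2$ yields
\[
\phi_1 = \tfrac12\bigl(\sigma^2 + \var(\e(Y\mid x_1)) - \var(\e(Y\mid x_2))\bigr),
\]
and dividing through by $\sigma^2$ produces~\eqref{eq:dis2VCE} immediately.

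Next I would derive~\eqref{eq:dis2ECV} by applying the law of total variance $\sigma^2 = \var(\e(Y\mid x_j)) + \e(\var(Y\mid x_j))$ for $j=1,2$, which turns the numerator $\var(\e(Y\mid x_1)) - \var(\e(Y\mid x_2))$ in~\eqref{eq:dis2VCE} into $\e(\var(Y\mid x_2)) - \e(\var(Y\mid x_1))$.

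Finally, for~\eqref{eq:dis2sym} I would write down $\phi_2$ by symmetry (swapping the roles of $x_1$ and $x_2$ in the expression for $\phi_1$) and then rewrite $\sigma^2$ in each $\phi_j$ via the law of total variance to cancel the $\var(\e(Y\mid x_j))$ term not indexed by $j$. This simplification gives $\phi_1 = \tfrac12(\var(\e(Y\mid x_1)) + \e(\var(Y\mid x_2)))$ and $\phi_2 = \tfrac12(\var(\e(Y\mid x_2)) + \e(\var(Y\mid x_1)))$, whose ratio is~\eqref{eq:dis2sym}. There is no real obstacle here; the only thing to be careful about is making sure the binomial coefficients and the $1/d$ prefactor combine correctly in the initial $\phi_1$ computation, since a missed factor of $1/2$ would break all three identities.
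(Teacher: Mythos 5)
Your proposal is correct and follows the same route as the paper: instantiate~\eqref{eq:shapfromult} at $d=2$ to get $\phi_1 = \tfrac12\bigl(\sigma^2 + \var(\e(Y\mid x_1)) - \var(\e(Y\mid x_2))\bigr)$, then obtain the remaining identities by the law of total variance. The only difference is that you spell out the ``algebraic rearrangements'' that the paper leaves implicit, and your intermediate forms $\phi_j = \tfrac12\bigl(\var(\e(Y\mid x_j)) + \e(\var(Y\mid x_{3-j}))\bigr)$ are exactly what is needed for~\eqref{eq:dis2sym}.
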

\begin{proof}
Using $\ult^2_{\{1,2\}} = \sigma^2$ and $\ult^2_\emptyset=0$,
we find that 
$$
\phi_1 = \frac12\bigl( \ult^2_{\{1\}} + \sigma^2-\ult^2_{\{2\}}\bigr)  
= \frac12\bigl( \sigma^2 + \var(\e(Y\mid x_1)) - \var(\e(Y\mid x_2)),
$$ 
which gives us~\eqref{eq:dis2VCE}. The others are algebraic rearrangements.
\end{proof}

We can use Proposition~\ref{prop:anyd2} to get analogous expressions for
$\phi_2/\sigma^2$ and $\phi_2/\phi_1$ by exchanging indices.

\subsubsection{Farlie-Gumbel-Morgenstern copula for $d=2$}
Here we focus on the case where the dependence between both components $x_1$ and $x_2$ is explicitly described by some copula. There exist simple conditional expectation formulas when considering some classical classes of copulas (see e.g., \cite{crane2008conditional} and references therein). Starting from such formulas, it is possible to derive explicit computations for Shapley values in a linear model. 
In this section, we state explicit results for the Farlie-Gumbel-Morgenstern family of copulas. 

The Farlie-Gumbel-Morgenstern copula describes a random vector $\bsx\in[0,1]^2$
with each component $x_j\sim \dustd[0,1]$ and joint probability density function 
\begin{align}\label{eq:fgmdensity}
c_\theta(x_1,x_2) = 1 + \theta(1-2x_1)(1-2x_2),\quad -1\le\theta\le 1. 
\end{align}
One can show that $\cor(x_1,x_2)=\theta/3$. 
\cite{lai78} proved that, for $0 \leq \theta \leq 1$, $x_1$ and $x_2$ are positively quadrant 
dependent and positively regression dependent. 
Moreover, 
\begin{align}\label{eq:copulalinreg}
\e(x_2\mid x_1)= \frac\theta3x_1+\Bigl(\frac12-\frac\theta6\Bigr).\end{align}
The linearity above is very useful for our purpose, as it will allow an explicit computation for Shapley values in that model. 

\begin{proposition}\label{prop:fgmlin}
Let $f(\bsx) = \bsx^\tran\beta$ for $\bsx,\beta\in\real^2$
and $\bsx\sim c_{\theta}(x_1,x_2)$, with $-1 \leq \theta \leq 1$. 
Then 
$$\frac{\phi_1}{\sigma^2} = \frac12 \left(1+\Bigl(1-\frac{\theta^2}{9}\Bigr) 
\frac{\beta_1^2-\beta_2^2}{12 \sigma^2}\right), 
$$
with $\sigma^2= ({\beta_1^2+ \beta_2^2})/{12}+ {\beta_1\beta_2 \theta}/{18}$. 
\end{proposition}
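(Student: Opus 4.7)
The plan is to exploit the bivariate reduction from Proposition \ref{prop:anyd2} together with the linearity of the FGM conditional expectation in \eqref{eq:copulalinreg}. Since $d=2$, equation \eqref{eq:dis2VCE} tells us that all we need to compute is the two quantities $\var(\e(Y\mid x_1))$ and $\var(\e(Y\mid x_2))$ for $Y=\beta_1 x_1+\beta_2 x_2$; everything else follows by substitution.

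First I would use \eqref{eq:copulalinreg} to write $\e(Y\mid x_1)=\beta_1 x_1+\beta_2\e(x_2\mid x_1)=(\beta_1+\beta_2\theta/3)\,x_1 + \beta_2(1/2-\theta/6)$. Because $x_1\sim\dustd[0,1]$ has variance $1/12$, this yields $\var(\e(Y\mid x_1)) = (\beta_1+\beta_2\theta/3)^2/12$. Next I would note that the FGM density $c_\theta$ is symmetric under swapping $x_1\leftrightarrow x_2$, so the analogue of \eqref{eq:copulalinreg} holds with the roles exchanged, giving $\var(\e(Y\mid x_2)) = (\beta_2+\beta_1\theta/3)^2/12$. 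Expanding both squares, the cross term $2\beta_1\beta_2\theta/3$ cancels in the difference, and I am left with
\begin{align*}
\var(\e(Y\mid x_1))-\var(\e(Y\mid x_2)) = \frac{1}{12}\Bigl(1-\frac{\theta^2}{9}\Bigr)(\beta_1^2-\beta_2^2).
\end{align*}

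Plugging this into \eqref{eq:dis2VCE} produces the claimed expression for $\phi_1/\sigma^2$. For the variance formula, I would use $\var(x_j)=1/12$ together with $\cov(x_1,x_2)=\cor(x_1,x_2)/12 = \theta/36$ (from the stated correlation $\theta/3$) so that $\sigma^2 = (\beta_1^2+\beta_2^2)/12 + 2\beta_1\beta_2\cdot\theta/36 = (\beta_1^2+\beta_2^2)/12 + \beta_1\beta_2\theta/18$.

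The proof is essentially a direct calculation and I do not anticipate a genuine obstacle; the only thing worth checking carefully is that the linear regression identity \eqref{eq:copulalinreg}, which is stated in the text only for $0\le\theta\le 1$ via \cite{lai78}, actually holds for the whole range $-1\le\theta\le 1$. This is immediate by directly integrating $x_2\,c_\theta(x_1,x_2)$ against the uniform marginal of $x_1$, so one sentence suffices to justify using the formula for negative $\theta$ as well.
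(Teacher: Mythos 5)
Your proposal is correct and follows essentially the same route as the paper's proof: use the linearity of the FGM regression \eqref{eq:copulalinreg} to get $\var(\e(Y\mid x_1))=(\beta_1+\beta_2\theta/3)^2/12$, obtain the companion expression by symmetry, and substitute into Proposition~\ref{prop:anyd2}. Your extra remark that \eqref{eq:copulalinreg} extends to all $-1\le\theta\le1$ by direct integration is a sensible (and correct) point of care that the paper leaves implicit.
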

\begin{proof}
From the linearity of the regression function~\eqref{eq:copulalinreg},
$$\e(f({\bf x})\mid x_1)=x_1 \Bigl(\beta_1+\frac{\theta}{3}\beta_2\Bigr)+
\beta_2 \Bigl(\frac12 - \frac{\theta}{6}\Bigr),$$
thus 
$$\var( \e(f(\bsx)\mid x_1))=\frac{1}{12} \Bigl(\beta_1+ \frac{\theta}{3} \beta_2\Bigr)^2.$$
Symmetry gets us the corresponding expression for 
$\var(\e(f(\bsx)\mid x_2))$. 
Then Proposition~\ref{prop:anyd2} establishes the expression for $\phi_1/\sigma^2$. 
Finally, because $\var(x_j)=1/12$ and $\cor(x_1,x_2)=\theta/3$, we get 
$\sigma^2=(\beta_1^2 + \beta_2^2)/12+ \beta_1 \beta_2 \theta/18$. 
\end{proof}


Now we consider the Farlie-Gumbel-Morgenstern copula, but we assume $x_j$ has as cumulative distribution function $F_j$, and probability density function $F_j'$, not necessarily from the uniform distribution. 

\begin{lemma}\label{lem:crane}
Let $\bsx\in\real^2$ have probability density $F'_1(x_1)F'_2(x_2)c_{\theta}(F_1(x_1),F_2(x_2))$, with $-1 \leq \theta \leq 1$. Then 
$$\e(x_2\mid x_1)=\e(x_2)+\theta (1-2F_1(x_1))\int_{\real} y(1-2F_2(y))F'_2(y)\rd y.$$
For exponential $x_j$ with $F_j(x_j)=1-\exp(-\lambda_jx_j)$ 
for $\lambda_j>0$, we get 
\begin{equation}\label{expmargfgm}
\e(x_2\mid x_1)= \frac{1}{\lambda_2}+ \frac{\theta}{2 \lambda_2}(1-2e^{-\lambda_1x_1}). 
\end{equation}
\end{lemma}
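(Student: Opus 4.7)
The plan is to compute $\e(x_2\mid x_1)$ directly from the conditional density, then specialize to the exponential case by evaluating a concrete integral.

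First I would verify that $x_1$ has marginal density $F_1'$. Integrating the joint density against $x_2$ and substituting $u=F_2(x_2)$ turns the inner integral into $\int_0^1 c_\theta(F_1(x_1),u)\rd u$, and by~\eqref{eq:fgmdensity} this equals $1+\theta(1-2F_1(x_1))\int_0^1(1-2u)\rd u = 1$ since $\int_0^1(1-2u)\rd u=0$. Therefore the marginal of $x_1$ has density $F_1'(x_1)$ (and by symmetry $x_2$ has density $F_2'$), so the conditional density of $x_2$ given $x_1$ is
\begin{equation*}
f_{2\mid 1}(x_2\mid x_1) = F_2'(x_2)\,c_\theta(F_1(x_1),F_2(x_2)).
\end{equation*}

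Next I would write $\e(x_2\mid x_1)=\int_\real x_2 F_2'(x_2)c_\theta(F_1(x_1),F_2(x_2))\rd x_2$ and expand the copula density from~\eqref{eq:fgmdensity}. The term arising from the ``$1$'' in $c_\theta$ integrates to $\e(x_2)$, and the $\theta(1-2F_1(x_1))$ factor pulls out of the remaining integral, giving exactly
\begin{equation*}
\e(x_2\mid x_1)=\e(x_2)+\theta(1-2F_1(x_1))\int_\real y(1-2F_2(y))F_2'(y)\rd y,
\end{equation*}
which is the first claim.

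For the exponential specialization, I would substitute $F_2(y)=1-e^{-\lambda_2 y}$ so that $1-2F_2(y)=-1+2e^{-\lambda_2 y}$ and $F_2'(y)=\lambda_2 e^{-\lambda_2 y}$. The required integral splits into two standard exponential moments: $\int_0^\infty y\lambda_2 e^{-\lambda_2 y}\rd y = 1/\lambda_2$ and $\int_0^\infty y\lambda_2 e^{-2\lambda_2 y}\rd y = 1/(4\lambda_2)$. Combining gives $-1/(2\lambda_2)$, and plugging into the general formula together with $\e(x_2)=1/\lambda_2$ produces~\eqref{expmargfgm} after simplifying $(1-2F_1(x_1))=2e^{-\lambda_1 x_1}-1$ and flipping the sign.

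There is no real obstacle here; the only mildly subtle point is confirming that the stated joint density is bona fide (i.e., integrates to one with the correct margins), which is immediate from the mean-zero property of $1-2F_j(x_j)$ on each margin. Everything else is bookkeeping with the two elementary exponential integrals.
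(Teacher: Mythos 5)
Your proof is correct: the verification that the FGM perturbation integrates to zero on each margin (so the marginals are $F_j'$ and the conditional density is $F_2'(x_2)c_\theta(F_1(x_1),F_2(x_2))$), the resulting general formula for $\e(x_2\mid x_1)$, and the two exponential moments $1/\lambda_2$ and $1/(4\lambda_2)$ combining to $-1/(2\lambda_2)$ with the sign flip from $1-2F_1(x_1)=-(1-2e^{-\lambda_1x_1})$ all check out against \eqref{expmargfgm}. The paper itself offers no argument here --- it simply cites \cite{crane2008conditional} --- so your direct computation is a complete, self-contained substitute for that reference, and it is essentially the standard derivation one would find there.
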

\begin{proof}
\cite{crane2008conditional}. 
\end{proof}

Next we assume that $\bsx$ has exponential margins and we transform these margins to be unit exponential 
by making a corresponding scale adjustment to $\beta$. From 
Section~\ref{sec:transbij},
we know that such transformations do 
not change the Shapley value. 

\begin{proposition}\label{propexpfgm}
Let $f(\bsx) = \bsx^\tran\beta$ for $\bsx,\beta\in\real^2$
where $\bsx$ has probability density function 
$e^{-x_1-x_2} c_{\theta}(1- e^{-x_1},1- e^{-x_2})$, where  $-1 \leq \theta \leq 1.$
Then 
\begin{align}\label{eq:expfgm}
\frac{\phi_1}{\sigma^2} = \frac12 \Bigl(1+\Bigl(1-\frac{\theta^2}{12}\Bigr)\frac{\beta_1^2-\beta_2^2}{ \sigma^2}\Bigr) 
\end{align}
with $\sigma^2= \beta_1^2+\beta_2^2+\theta\beta_1\beta_2/2$. 
\end{proposition}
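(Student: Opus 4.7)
The plan is to mimic the proof of Proposition~\ref{prop:fgmlin}, but use the conditional expectation formula in Lemma~\ref{lem:crane} with unit exponential margins in place of~\eqref{eq:copulalinreg}. Concretely, with $\lambda_1=\lambda_2=1$, Lemma~\ref{lem:crane} gives
\begin{equation*}
\e(x_2\mid x_1)=1+\tfrac{\theta}{2}-\theta\, e^{-x_1},
\end{equation*}
so $\e(f(\bsx)\mid x_1)=\beta_1 x_1-\beta_2\theta\, e^{-x_1}+\text{const}$. Thus $\var(\e(f(\bsx)\mid x_1))$ reduces to a quadratic form in $(\beta_1,\beta_2\theta)$ involving $\var(x_1)$, $\var(e^{-x_1})$, and $\cov(x_1,e^{-x_1})$ under the unit exponential law for $x_1$.

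Next I would compute these three scalar moments using the standard integrals $\int_0^\infty x^k e^{-\alpha x}\rd x=k!/\alpha^{k+1}$: namely $\var(x_1)=1$, $\e(e^{-x_1})=1/2$, $\e(e^{-2x_1})=1/3$ so $\var(e^{-x_1})=1/12$, and $\e(x_1 e^{-x_1})=1/4$ so $\cov(x_1,e^{-x_1})=-1/4$. Plugging in gives
\begin{equation*}
\var(\e(f(\bsx)\mid x_1))=\beta_1^2+\tfrac{\theta^2}{12}\beta_2^2+\tfrac{\theta}{2}\beta_1\beta_2.
\end{equation*}
By exchanging the roles of indices $1$ and $2$ (the joint density is symmetric under swapping $x_1\leftrightarrow x_2$), the analogous expression for $\var(\e(f(\bsx)\mid x_2))$ is obtained by swapping $\beta_1$ and $\beta_2$. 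Subtracting produces $\var(\e(f\mid x_1))-\var(\e(f\mid x_2))=(\beta_1^2-\beta_2^2)(1-\theta^2/12)$, and then~\eqref{eq:dis2VCE} of Proposition~\ref{prop:anyd2} immediately delivers~\eqref{eq:expfgm}.

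Finally, for $\sigma^2=\var(f(\bsx))$ I would use $\sigma^2=\beta_1^2\var(x_1)+\beta_2^2\var(x_2)+2\beta_1\beta_2\cov(x_1,x_2)$, and exploit $\cov(x_1,x_2)=\cov(x_1,\e(x_2\mid x_1))$ together with the expression for $\e(x_2\mid x_1)$ above. Only the $-\theta e^{-x_1}$ term contributes, yielding $\cov(x_1,x_2)=-\theta\cov(x_1,e^{-x_1})=\theta/4$, whence $\sigma^2=\beta_1^2+\beta_2^2+\theta\beta_1\beta_2/2$ as stated.

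I do not foresee any real obstacle: everything reduces to closed-form integrals against the unit exponential density plus bilinear algebra. The one bookkeeping point requiring care is justifying the rescaling step — invoking Section~\ref{sec:transbij} to pass from general rates $\lambda_j$ to $\lambda_j=1$ without changing $\phi_1/\sigma^2$ — and making sure the symmetry argument for $\var(\e(f\mid x_2))$ is valid (it is, because the joint density $e^{-x_1-x_2}c_\theta(1-e^{-x_1},1-e^{-x_2})$ is invariant under swapping the two coordinates).
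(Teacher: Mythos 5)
Your proposal is correct and follows essentially the same route as the paper's proof: apply Lemma~\ref{lem:crane} with unit exponential margins, compute $\var(\e(f(\bsx)\mid x_1))=\beta_1^2+\theta^2\beta_2^2/12+\theta\beta_1\beta_2/2$ via the moments $\var(e^{-x_1})=1/12$ and $\cov(x_1,e^{-x_1})=-1/4$, invoke the swap symmetry of the joint density, and conclude with Proposition~\ref{prop:anyd2}. Your explicit verification of $\sigma^2$ via $\cov(x_1,x_2)=\theta/4$ is a small addition the paper leaves implicit, and it checks out.
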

\begin{proof}
From Lemma~\ref{lem:crane}, 
$\e(x_2\mid x_1)=1+\theta/2-\theta e^{-x_1}$
so 
$$\e(f(\bsx)\mid x_1) = \beta_1x_1+\beta_2(1+\theta/2-\theta e^{-x_1}).$$
Therefore 
$$\var(\e(f(\bsx)\mid x_1)) = \beta_1^2 
+\beta_2^2\theta^2\var(e^{-x_1}) 
-2\beta_1\beta_2\theta\cov(x_1,e^{-x_1}). 
$$
Now $\var(e^{-x_1})=\e(e^{-2x_1})-\e(e^{-x_1})^2=1/12$
and 
$$
\cov(x_1,e^{-x_1})=\int_0^\infty xe^{-2x}\rd x -\frac12=-\frac14,
$$
so $\var(\e(f(\bsx)\mid x_1)) = \beta_1^2+\beta_2^2\theta^2/12+\beta_1\beta_2\theta/2$. 
This establishes~\eqref{eq:expfgm} by Proposition~\ref{prop:anyd2}. 
\end{proof}

Suppose that $\beta_1>\beta_2>0$. Then of course $\phi_1/\sigma^2>1/2$. 
Equation~\eqref{eq:expfgm} shows that $\phi_1/\sigma^2$ decreases 
as $\theta$ increases from $0$ to $1$. It does not approach $1/2$ because 
even at $\theta=1$, $x_2$ is not a deterministic function of $x_1$.

\subsubsection{Gaussian variables, exponential $f$, $d=2$}\label{gausframenl}

Let $\bsx\sim\dnorm(\mu,\Sigma)$ and take $Y=e^{\beta_0+\sum_{j=1}^dx_j\beta_j}$.
The effect of $\beta_0$ and $\mu_j$ is simply to scale $Y$ and so
we can take $\beta_0=0$ and $\mu=0$ without affecting $\phi_j/\sigma^2$.
Next we suppose that the diagonal elements of $\Sigma$ are nonzero.
By the transformation result in
Section~\ref{sec:transbij} 
we can replace
each $x_j$ by $x_j/\Sigma_{jj}$ if need be without changing $\phi_j$
and so we suppose that each $x_j\sim\dnorm(0,1)$. Here we find
variable importances for $d=2$.


\begin{proposition}\label{prop:gauslinexp}
Let $f(\bsx) = \exp\bigl(\bsx^\tran\beta\bigr)$ for $\bsx,\beta\in\real^2$
and $\bsx\sim\dnorm({\bf 0},\Sigma)$, for $\Sigma = \begin{pmatrix}
   1 & \rho \\
   \rho & 1 
\end{pmatrix}$. Then 
\begin{align}\label{eq:gauslinexp}
\frac{\phi_1}{\sigma^2} = 
\frac12 \biggl(1+
\frac{e^{(\beta_1+ \beta_2\rho)^2}-e^{(\beta_2+ \beta_1\rho)^2}}
{e^{\beta_1^2+\beta_2^2+2\rho\beta_1\beta_2}-1}\biggr),
\end{align}
where the variance of $f(\bsx)$ is
\begin{align}\label{eq:varexpo}
\sigma^2=e^{\beta_1^2+\beta_2^2+2 \rho \beta_1\beta_2}(e^{\beta_1^2+\beta_2^2+2 \rho \beta_1\beta_2} -1).
\end{align}
\end{proposition}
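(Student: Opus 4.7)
The plan is to invoke Proposition~\ref{prop:anyd2} and reduce the work to three Gaussian moment-generating-function computations: $\sigma^2=\var(f(\bsx))$, and the two conditional-expectation variances $\var(\e(f(\bsx)\mid x_1))$ and $\var(\e(f(\bsx)\mid x_2))$. Since $\bsx^\tran\beta\sim\dnorm(0,\beta^\tran\Sigma\beta)$, the lognormal identity immediately gives $\e(f(\bsx))=\exp(\beta^\tran\Sigma\beta/2)$ and $\e(f(\bsx)^2)=\exp(2\beta^\tran\Sigma\beta)$, so $\sigma^2=e^{\beta^\tran\Sigma\beta}(e^{\beta^\tran\Sigma\beta}-1)$, which upon expanding $\beta^\tran\Sigma\beta=\beta_1^2+\beta_2^2+2\rho\beta_1\beta_2$ is exactly \eqref{eq:varexpo}.

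For the conditional piece, I would use the standard fact that $x_2\mid x_1\sim\dnorm(\rho x_1,\,1-\rho^2)$, hence
\[
\e(e^{\beta_2 x_2}\mid x_1)=\exp\bigl(\beta_2\rho x_1+\tfrac12\beta_2^2(1-\rho^2)\bigr),
\]
so that
\[
\e(f(\bsx)\mid x_1)=\exp\bigl((\beta_1+\beta_2\rho)x_1+\tfrac12\beta_2^2(1-\rho^2)\bigr).
\]
This is an exponential of a Gaussian, so one more lognormal computation with $a=\beta_1+\beta_2\rho$ and $b=\tfrac12\beta_2^2(1-\rho^2)$ gives $\var(\e(f(\bsx)\mid x_1))=e^{2b}\,e^{a^2}(e^{a^2}-1)$. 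The key algebraic observation to carry out is that the combined exponent satisfies $2b+a^2=\beta_2^2(1-\rho^2)+(\beta_1+\beta_2\rho)^2=\beta_1^2+\beta_2^2+2\rho\beta_1\beta_2=\beta^\tran\Sigma\beta$, so that
\[
\var(\e(f(\bsx)\mid x_1))=e^{\beta^\tran\Sigma\beta}\bigl(e^{(\beta_1+\beta_2\rho)^2}-1\bigr),
\]
and by swapping the roles of $x_1,x_2$ one obtains the analogous formula with $(\beta_2+\beta_1\rho)^2$ in place of $(\beta_1+\beta_2\rho)^2$.

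Finally I would substitute these into the identity~\eqref{eq:dis2VCE} of Proposition~\ref{prop:anyd2}. The common factor $e^{\beta^\tran\Sigma\beta}$ cancels between numerator and $\sigma^2$, as do the two additive constants $-1$, yielding the clean ratio in~\eqref{eq:gauslinexp}. There is no real obstacle here beyond bookkeeping; the one step that must be executed carefully is the simplification $\beta_2^2(1-\rho^2)+(\beta_1+\beta_2\rho)^2=\beta^\tran\Sigma\beta$, since without it the answer appears considerably more complicated than the stated form, and the appealing cancellation in the denominator of~\eqref{eq:gauslinexp} would be obscured.
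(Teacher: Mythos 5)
Your proposal is correct and follows essentially the same route as the paper's proof: lognormal moments for $\sigma^2$, the conditional law $x_2\mid x_1\sim\dnorm(\rho x_1,1-\rho^2)$ to get $\var(\e(Y\mid x_1))=e^{\beta^\tran\Sigma\beta}\bigl(e^{(\beta_1+\rho\beta_2)^2}-1\bigr)$, and then Proposition~\ref{prop:anyd2} with cancellation of the common factor $e^{\beta^\tran\Sigma\beta}$. The key simplification $\beta_2^2(1-\rho^2)+(\beta_1+\beta_2\rho)^2=\beta^\tran\Sigma\beta$ that you flag is exactly the step the paper uses as well.
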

\begin{proof}
Recall the lognormal moments: if $Z\sim\dnorm(\mu,\sigma^2)$
then $\e(e^Z)= e^{\mu+\sigma^2/2}$ and
$\var(e^Z) = (e^{\sigma^2}-1)e^{2\mu+\sigma^2}$.
Taking $Z=\bsx^\tran\beta$ we find that $Y=e^Z$ has
variance $\sigma^2$ given by~\eqref{eq:varexpo}.

The distribution of $x_2\beta_2$ given $x_1$ is 
$\dnorm(\rho x_1\beta_2,(1-\rho^2)\beta^2_2)$.
Therefore 
\begin{align*}
\e(Y\mid x_1) &= e^{(\beta_1+\rho\beta_2)x_1+\beta_2^2(1-\rho^2)/2},\quad\text{and so}\\
\var(\e(Y\mid x_1)) &= e^{\beta_2^2(1-\rho^2)}
e^{(\beta_1+\rho\beta_2)^2}
(e^{(\beta_1+\rho\beta_2)^2}-1)\\
& = e^{\beta^\tran\Sigma\beta}(e^{(\beta_1+\rho\beta_2)^2}-1).
\end{align*}
Similarly, $\var(\e(Y\mid x_2))
= e^{\beta^\tran\Sigma\beta}(e^{(\beta_2+\rho\beta_1)^2}-1)$.
Then applying Proposition~\ref{prop:anyd2} and noticing
that the lead factor $e^{\beta^\tran\Sigma\beta}$ appears
also in $\sigma^2$, yields the result.
\end{proof}

If $\rho=\pm1$ then $\phi_1/\sigma^2=1/2$ as it must because
there is then a bijection between the variables.
The value of $\phi_1/\sigma^2$ in~\eqref{eq:gauslinexp} is
unchanged if we replace $\rho$ by $-\rho$.  The formula is
not obviously symmetric, but the fraction within parentheses
there can be divided by the corresponding one for $-\rho$
and the ratio reduces to $1$. More directly, we know from
Section~\ref{sec:transbij} that making
the transformation $x_2\to-x_2$ and $\beta_2\to-\beta_2$ would
leave the variable importances unchanged while switching $\rho\to-\rho$.

It is clear that for $\beta_1>\beta_2$ we must have $\phi_1/\sigma^2\ge1/2$.
Even with the closed form~\eqref{eq:gauslinexp}, it is not obvious
how $\phi_1/\sigma^2$ should depend on $\rho$ or on $\beta$. 
Figure~\ref{fig:lognormalphi}
shows that increasing $|\rho|$ from zero generally raises the importance of
$x_1$ until at some high correlation level the relative importance quickly drops down to $1/2$.  Also, for $\rho=0$ the effect 
of $\beta_1$ over the range $2\le\beta_1\le 8$ is quite small when $\beta_2=1$.

\begin{figure}
\centering
\includegraphics[width=\hsize]{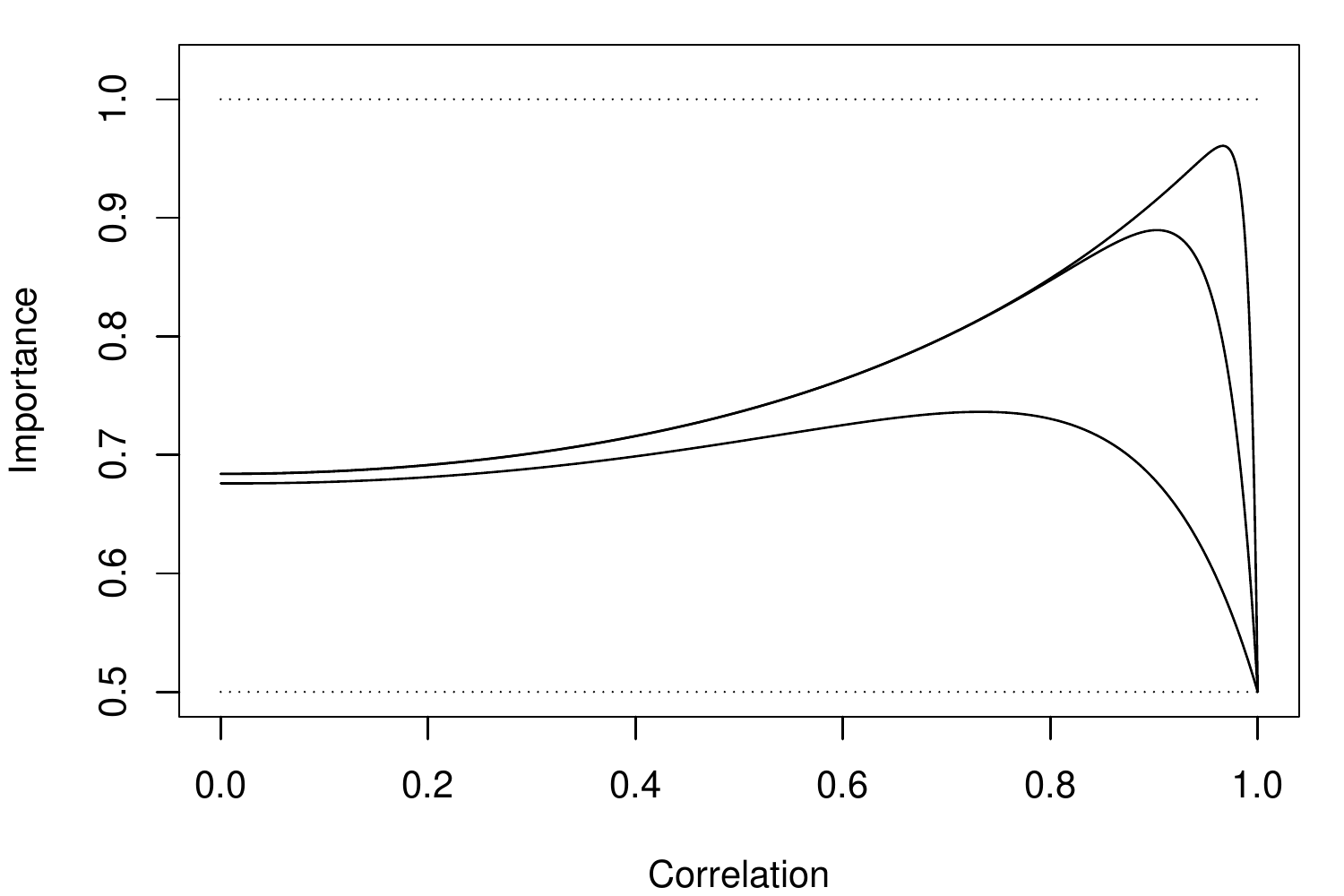}
\caption{\label{fig:lognormalphi}
Relative importance $\phi_1/\sigma^2$ versus correlation $|\rho|$
from Proposition~\ref{prop:fgmlin}. From top to bottom,
$\beta^\tran$ is $(8,1)$, $(4,1)$, and $(2,1)$.
}
\end{figure}

The lognormal case is different from the bivariate normal case. There, the value of $\phi_1$
converges monotonically towards $1/2$ as $|\rho|$ increases from $0$ to $1$.
\subsection{Holes}

Here we consider the simplest setting where there is an unreachable
part of the $\bsx$ space.
We consider two binary variables $x_1$ and $x_2$ but
$x_1=x_2=1$ never occurs.  For instance $f$ could be the
weight of a sea turtle, $x_1$ could be $1$ iff the turtle is
bearing eggs and $x_2$ could be $1$ iff the turtle is male.
It may seem unreasonable to even attempt to
compare the importance of these variables (male/female
versus eggs/none) but Shapley
value does provide such a comparison based on compelling axioms
in the event that we do seek a comparison.

\begin{table}[t]
\centering
\begin{tabular}{llll}
\toprule
$p$ & $x_1$ & $x_2$ & $y$\\
\midrule
$p_0$& $0$ & $0$ & $y_0$\\
$p_1$ & $1$ & $0$ &$y_1$\\
$p_2$& $0$ & $1$ &$y_2$\\
\bottomrule
\end{tabular}
\caption{\label{tab:3pt}
The random variable $y=f(\bsx)$ is the given function
of $\bsx=(x_1,x_2)$. That vector takes three values
with the probabilities in this table. For example,
$\Pr( \bsx=(1,0))=p_1$ and then $y=y_1$.
}
\end{table}
This simplest setting 
is depicted in Table~\ref{tab:3pt}
where $p_0+p_1+p_2=1$.
We assume that $p_1>0$ and $p_2>0$ for otherwise
the function does not have two input variables.

\begin{theorem}\label{thm:turtle}
Let $y$ be a function of the random vector
$\bsx$ as given in Table~\ref{tab:3pt}.
Assume that $\sigma^2=\var(y)>0$,
and $\min(p_1,p_2)>0$. Then the Shapley relative importance
of variable $x_1$ is
\begin{align}\label{eq:thmturtle}
\frac12\Bigl( 
1 + \frac{p_0}{\sigma^2} \times  
\frac{ 
p_1(1-p_1)\bar y_1^2-p_2(1-p_2) \bar y_2^2  
}
{(1-p_1)(1-p_2)}
\Bigr)  
\end{align} 
where $\bar y_j=y_j-y_0$ for $j=1,2$.
\end{theorem}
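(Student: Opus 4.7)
The plan is to apply Proposition~\ref{prop:anyd2} (specifically~\eqref{eq:dis2VCE}) and compute the two conditional-expectation variances $\var(\e(Y\mid x_1))$ and $\var(\e(Y\mid x_2))$ explicitly from Table~\ref{tab:3pt}, then simplify their difference.

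First I would note that the quantity $\phi_1/\sigma^2$ is invariant under the translation $y\mapsto y-y_0$, since shifting $Y$ by a constant changes neither the conditional variances nor $\sigma^2$. So without loss of generality I may assume $y_0=0$, in which case $\bar y_j=y_j$ for $j=1,2$ and the target formula becomes
\[
\frac{\phi_1}{\sigma^2} = \frac12\Bigl(1 + \frac{p_0\bigl(p_1(1-p_1)y_1^2-p_2(1-p_2)y_2^2\bigr)}{\sigma^2(1-p_1)(1-p_2)}\Bigr).
\]

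Next I would read off the conditional distributions. Variable $x_1$ takes value $1$ with probability $p_1$ (forcing $y=y_1$) and value $0$ with probability $1-p_1=p_0+p_2$, in which case $y=y_0=0$ or $y=y_2$ with conditional probabilities $p_0/(1-p_1)$ and $p_2/(1-p_1)$. Hence $\e(Y\mid x_1=1)=y_1$ and $\e(Y\mid x_1=0)=p_2 y_2/(1-p_1)$. Writing $\mu=\e(Y)=p_1y_1+p_2y_2$, a direct computation gives
\[
\var(\e(Y\mid x_1)) = p_1 y_1^2 + \frac{p_2^2 y_2^2}{1-p_1} - \mu^2,
\]
and by the symmetric calculation (swap $1\leftrightarrow 2$),
\[
\var(\e(Y\mid x_2)) = p_2 y_2^2 + \frac{p_1^2 y_1^2}{1-p_2} - \mu^2.
\]

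Then I would subtract; the common $\mu^2$ cancels, leaving
\[
y_1^2\Bigl(p_1-\frac{p_1^2}{1-p_2}\Bigr)-y_2^2\Bigl(p_2-\frac{p_2^2}{1-p_1}\Bigr)
= y_1^2\,\frac{p_1(1-p_1-p_2)}{1-p_2}-y_2^2\,\frac{p_2(1-p_1-p_2)}{1-p_1}.
\]
The key simplification, which I expect to be the only mild obstacle, is to use the constraint $p_0+p_1+p_2=1$ to replace $1-p_1-p_2$ by $p_0$ and then put the two terms over the common denominator $(1-p_1)(1-p_2)$; this produces exactly $p_0\bigl(p_1(1-p_1)y_1^2-p_2(1-p_2)y_2^2\bigr)/[(1-p_1)(1-p_2)]$. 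Dividing by $\sigma^2$ and plugging into~\eqref{eq:dis2VCE} yields~\eqref{eq:thmturtle}. The hypothesis $\min(p_1,p_2)>0$ ensures the conditional expectations are well defined, and $\sigma^2>0$ ensures the ratio in~\eqref{eq:dis2VCE} makes sense.
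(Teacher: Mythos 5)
Your proposal is correct and follows essentially the same route as the paper: reduce to $y_0=0$, compute $\var(\e(Y\mid x_1))$ and $\var(\e(Y\mid x_2))$ from the three-point distribution, simplify their difference using $1-p_1-p_2=p_0$, and apply Proposition~\ref{prop:anyd2}. Your use of $\var(\e(Y\mid x_1))=\e\bigl[\e(Y\mid x_1)^2\bigr]-\mu^2$ rather than expanding $\e\bigl[(\e(Y\mid x_1)-\mu)^2\bigr]$ lets the cross term in $\mu^2$ cancel immediately, which is a minor streamlining of the same computation.
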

\begin{proof}
See section~\ref{sec:turtleproof}.
\end{proof}

We see that when $p_0=0$, then the Shapley relative importance
of $x_1$ is $1/2$.  That is what it must be because there is then
a bijection between $x_1$ and $x_2$ via $x_1+x_2=1$.

Now suppose that  $\bar y_1=\bar y_2$.  For instance
$y_1=y_2=1$ while $y_0=0$. 
Then the more important variable is
the one with the larger variance.  That is $x_1$ is more
important if $p_1(1-p_1)> p_2(1-p_2)$.  This can only
happen if $p_1>p_2$.
So the more probable input is the more important one in this case.



\subsection{Maximum of exponential random variables}

 \cite{kein:hilg:jeil:rupp:2004} considered a network of neurons
 $e_1,\dots,e_d$ where the $e_j$ have independent lifetimes $x_j$ that are 
 exponentially distributed with mean $1/\lambda_j$.
In their setting the value of a set of neurons is $\phi(u) = \e( \max_{j\in u}x_j)$,
 that is the expected amount of time that at least part of that subset survives.
 For $d=3$, they give a Shapley value of 
 $$
 \phi_j=\frac1{\lambda_1}
 -\frac12\frac1{\lambda_1+\lambda_2}
 -\frac12\frac1{\lambda_1+\lambda_3}
 +\frac13\frac1{\lambda_1+\lambda_2+\lambda_3},
 $$
but they do not give a proof. While value in this example is not based on
prediction error, we include it because it is another example of a closed
form for Shapley value based on random variables.
We prove their formula here 
and generalize it to any $d\ge1$.

\begin{theorem}\label{thm:exprv}
Let the value of a set $u\subseteq1{:}d$ be
$\val(u) = \e(\max_{j\in u}x_j)$ where $x_1,\dots,x_d$
are independent exponential random variables with
$\e(x_j) = 1/\lambda_j$.
Then
$$
\phi_j =
\sum_{r=1}^d\frac{(-1)^{r-1}}r
\sum_{w\subseteq1{:}d,j\in w,|w|=r}
\frac1{\sum_{\ell\in w}\lambda_\ell}.
$$
\end{theorem}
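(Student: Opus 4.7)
The plan is to reduce the theorem to the Shapley value of elementary ``unanimity games,'' using inclusion–exclusion and the additivity axiom.

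\textbf{Step 1: Expand $\val(u)$ by inclusion–exclusion.} Since the $x_j$ are independent exponentials, $\Pr(\max_{j\in u}x_j\le t) = \prod_{j\in u}(1-e^{-\lambda_j t})$. Expanding this product and using $\e(\max_{j\in u}x_j) = \int_0^\infty \Pr(\max_{j\in u}x_j>t)\rd t$, a short computation gives
\begin{align*}
\val(u) = \e\bigl(\max_{j\in u}x_j\bigr) = \sum_{\emptyset\ne v\subseteq u} \frac{(-1)^{|v|-1}}{\sum_{\ell\in v}\lambda_\ell}.
\end{align*}
This rewrites $\val$ as a linear combination of the indicator games $\val_v(u) \equiv \mathbf{1}[v\subseteq u]$, indexed by nonempty $v\subseteq 1{:}d$, with coefficients $c_v = (-1)^{|v|-1}/\sum_{\ell\in v}\lambda_\ell$.

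\textbf{Step 2: Compute the Shapley value of each unanimity game $\val_v$.} For fixed nonempty $v$, I claim the Shapley value of $\val_v$ assigns $1/|v|$ to every $j\in v$ and $0$ to every $j\notin v$. Indeed, any $j\notin v$ is a dummy (adding or removing $j$ never changes whether $v\subseteq u$), so the dummy axiom gives $\phi_j=0$. By symmetry, all $j\in v$ receive the same value, and by efficiency these values sum to $\val_v(1{:}d)=1$, so each equals $1/|v|$. This could also be read off directly from the Shapley formula~\eqref{eq:shapfromult} but the axiomatic argument is cleaner.

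\textbf{Step 3: Assemble via additivity.} By the additivity axiom, the Shapley value of $\val = \sum_v c_v \val_v$ is $\sum_v c_v$ times the Shapley value of $\val_v$, so
\begin{align*}
\phi_j = \sum_{\emptyset\ne v\subseteq 1{:}d,\ j\in v} \frac{(-1)^{|v|-1}}{\sum_{\ell\in v}\lambda_\ell}\cdot\frac1{|v|} = \sum_{r=1}^d \frac{(-1)^{r-1}}{r}\sum_{w\subseteq1{:}d,\,j\in w,\,|w|=r}\frac1{\sum_{\ell\in w}\lambda_\ell},
\end{align*}
grouping by $r=|v|$, which is exactly the claimed formula.

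The potential obstacle is essentially bookkeeping: verifying the inclusion–exclusion expansion of $\val(u)$ and recognizing the Shapley value of a unanimity game. Neither is hard, and once they are in place the theorem follows directly from the additivity axiom. A sanity check for $d=3$ recovers the \cite{kein:hilg:jeil:rupp:2004} formula, since the $r=1$ term contributes $1/\lambda_j$, the $r=2$ terms contribute $-\tfrac12[1/(\lambda_j+\lambda_k)+1/(\lambda_j+\lambda_\ell)]$, and the $r=3$ term contributes $\tfrac13 \cdot 1/(\lambda_1+\lambda_2+\lambda_3)$.
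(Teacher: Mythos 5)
Your proof is correct, and it takes a genuinely different route from the paper for the main combinatorial step. Both arguments begin identically: your Step 1 is exactly the paper's Proposition~\ref{prop:emu}, expanding $\e(M_u)$ by inclusion--exclusion over nonempty $v\subseteq u$. From there the paper substitutes this expansion directly into the explicit Shapley formula and evaluates the resulting double sum by brute force, introducing a slack variable and invoking the diagonal-sum identity $\sum_{r=0}^A\binom{L+r}{r}=\binom{A+L+1}{A}$ to show that the inner sum collapses to $d/|w|$. You instead recognize the expansion as expressing $\val$ in the unanimity-game basis $\val_v(u)=\mathbf{1}[v\subseteq u]$, compute the Shapley value of each $\val_v$ from the dummy, symmetry, and efficiency axioms (each member of $v$ gets $1/|v|$), and assemble by linearity. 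Your route is cleaner and more conceptual --- it makes transparent \emph{why} the factor $1/|w|$ appears, and it would generalize immediately to any value function admitting a similar expansion --- while the paper's computation is self-contained and does not presuppose familiarity with the unanimity basis. One small point to tighten: the additivity axiom as stated in Section~\ref{sec:shapley} covers only sums of two games, whereas you need linearity over a combination with real coefficients $c_v$; homogeneity is immediate from the explicit formula~\eqref{eq:shapfromult}, which is visibly linear in $\val$, but you should say so rather than lean on the axiom alone.
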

\begin{proof}
See section~\ref{sec:proveexprv}.
\end{proof}

\section{Conclusions}\label{sec:conc}

The Shapley value from economics remedies the conceptual
difficulties in measuring importance of dependent variables
via ANOVA. Like ANOVA it uses variances, but unlike the dependent
data ANOVA, Shapley value never goes negative and it can be
defined without onerous assumptions on the input distribution.

We find that Shapley value has useful properties. When two variables
are functionally equivalent, then they get equal Shapley value. When
an invertible transformation is made to a variable, it retains its Shapley value.
We thus conclude that \cite{song:nels:staum:2016} had the right
idea proposing Shapley value for dependent inputs.
Computation of Shapley values remains a challenge outside of special cases like 
the ones we discuss here.

A potential application that we find interesting is measuring the
importance of parameters in a Bayesian context.  When the parameter
vector $\beta$ has an approximate Gaussian posterior distribution, as the central limit theorem
often provides, then Theorem~\ref{thm:gauslin} yields a measure $\phi_j(\bsx_0)$
for the importance of parameter $\beta_j$ for the posterior uncertainty of the
prediction $\bsx_0^\tran\beta$.  We hasten to add that parameter independence is quite different from variable importance, which is a more common goal.  By this measure an
important parameter is one whose uncertainty dominates uncertainty in $\bsx_0^\tran\beta$. The corresponding variable may or may not be important.
Another potential application is in modeling the importance of order
statistics. They naturally belong to a non-rectangular set~\cite{lebr:dutf:2014}.

\section*{Acknowledgments}

This work was supported by grant  DMS-1521145
from the U.S.\ National Science Foundation.
We thank Marco Scarsini, Jiangming Xiang, Bertrand Iooss,
two anonymous referees and an associate editor for valuable comments.

\bibliographystyle{apalike}
\bibliography{sensitivity}

\section{Proofs}\label{sec:proofs}

\subsection{Proof of Theorem~\ref{thm:gauslin}}\label{sec:proofgauslin}
Recall that $f(\bsx) = \bsx^\tran\beta$ where $\bsx\sim\dnorm(\mu,\Sigma)$.
We also assumed that $\Sigma$ is of full rank.
Now
$
\var(\bsx_{-u}\mid\bsx_u)
= \Sigma_{-u,-u}-\Sigma_{-u,u}\Sigma_{u,u}^{-1}\Sigma_{u,-u},
$
and so
\begin{align*}
\var( f(\bsx)\mid\bsx_u)
&=\var(\bsx_u^\tran\beta_u+\bsx_{-u}^\tran\beta_{-u}\mid\bsx_u)\\
&=\var(\bsx_{-u}^\tran\beta_{-u}\mid\bsx_u)\\
&=\beta_{-u}^\tran 
\bigl(\Sigma_{-u,-u}-\Sigma_{-u,u}\Sigma_{u,u}^{-1}\Sigma_{u,-u}\bigr) 
\beta_{-u}.
\end{align*}

We will use $v=v(j,u) \equiv -u-\{j\}$.
It helps to visualize the partitioned covariance matrix
$$
\Sigma = \begin{pmatrix}
\Sigma_{uu} & \Sigma_{uj} & \Sigma_{uv}\\
\Sigma_{ju} & \Sigma_{jj} & \Sigma_{jv}\\
\Sigma_{vu} & \Sigma_{vj} & \Sigma_{vv}\\
\end{pmatrix}
$$
if the indices have been ordered for those in $u$ to precede $j$ which precedes
those in $v$.
For this section only, we make a further notational compression
shortening $u+\{j\}$ to $u+j$.
Next
\begin{align*}
\ult^2_{u+j}-\ult_u^2
&=\var( f(\bsx)\mid \bsx_u)-\var(f(\bsx)\mid \bsx_{u+j})\\
&=
\beta_{-u}^\tran\bigl(\Sigma_{-u,-u}-\Sigma_{-u,u}\Sigma_{u,u}^{-1}\Sigma_{u,-u}\bigr)\beta_{-u}\\
&\phe\,-\beta_v^\tran\bigl(\Sigma_{vv}-\Sigma_{v,u+j}\Sigma_{u+j,u+j}^{-1}\Sigma_{u+j,v}\bigr)\beta_v.
\end{align*}

Using the formula for the inverse of a partitioned matrix, we find that
$$
\Sigma_{u+j,u+j}^{-1} =
\begin{pmatrix}
\Sigma_{uu}^{-1}+\Sigma_{uu}^{-1}\Sigma_{uj}D_j(u)\Sigma_{ju}\Sigma_{uu}^{-1}
& -\Sigma_{uu}^{-1}\Sigma_{uj}D_j(u)\\
-D_j(u)\Sigma_{ju}\Sigma_{uu}^{-1} & D_j(u) 
\end{pmatrix},
$$
where $D_j(u) = (\Sigma_{jj}-\Sigma_{ju}\Sigma_{uu}^{-1}\Sigma_{uj})^{-1}
=\var(x_j\mid \bsx_u)^{-1}$, which exists because $\Sigma$ has full rank.
Continuing,
\begin{align*}
&\Sigma_{v,u+j}\Sigma_{u+j,u+j}^{-1}\Sigma_{u+j,v}\\
& = 
\begin{pmatrix}
\Sigma_{vu} & \Sigma_{vj}
\end{pmatrix}
\begin{pmatrix}
\Sigma_{uu}^{-1}+\Sigma_{uu}^{-1}\Sigma_{uj}D_j(u)\Sigma_{ju}\Sigma_{uu}^{-1}
& -\Sigma_{uu}^{-1}\Sigma_{uj}D_j(u)\\
-D_j(u)\Sigma_{ju}\Sigma_{uu}^{-1} & D_j(u) 
\end{pmatrix}
\begin{pmatrix}
\Sigma_{uv}\\
\Sigma_{jv}
\end{pmatrix}\\
&=
\begin{pmatrix}
\Sigma_{vu} & \Sigma_{vj}
\end{pmatrix}
\begin{pmatrix}
\Sigma_{uu}^{-1}\Sigma_{uv}+\Sigma_{uu}^{-1}\Sigma_{uj}D_j(u)\Sigma_{ju}\Sigma_{uu}^{-1}\Sigma_{uv} -\Sigma_{uu}^{-1}\Sigma_{uj}D_j(u)\Sigma_{jv}\\
-D_j(u)\Sigma_{ju}\Sigma_{uu}^{-1}\Sigma_{uv} + D_j(u) \Sigma_{jv}
\end{pmatrix}\\
&=
\Sigma_{vu}\Sigma_{uu}^{-1}\Sigma_{uv}+\Sigma_{vu}\Sigma_{uu}^{-1}\Sigma_{uj}D_j(u)\Sigma_{ju}\Sigma_{uu}^{-1}\Sigma_{uv} -\Sigma_{vu}\Sigma_{uu}^{-1}\Sigma_{uj}D_j(u)\Sigma_{jv}\\
&\phe\,-
\Sigma_{vj}D_j(u)\Sigma_{ju}\Sigma_{uu}^{-1}\Sigma_{uv}+\Sigma_{vj}D_j(u)\Sigma_{jv}\\
&=\Sigma_{vu}\Sigma_{uu}^{-1}\Sigma_{uv}
+D_j(u)\,\bigl(\Sigma_{vu}\Sigma_{uu}^{-1}\Sigma_{uj}-\Sigma_{vj}\bigr)
\bigl(\Sigma_{ju}\Sigma_{uu}^{-1}\Sigma_{uv}-\Sigma_{jv}\bigr)\\
&=
\Sigma_{vu}\Sigma_{uu}^{-1}\Sigma_{uv}
+D_j(u) \cov( \bsx_v,x_j\mid\bsx_u) \cov( x_j,\bsx_v\mid\bsx_u)
\end{align*}
recalling that $D_j(u)$ is a scalar.

Now $\ult_{u+j}^2-\ult_u^2$ is 
\begin{align*}
&\beta_{-u}^\tran\cov(\bsx_{-u}\mid\bsx_u)\beta_{-u}-\beta_v^\tran\Sigma_{vv}\beta_v\\
&\phe+\beta_v^\tran\bigl(
\Sigma_{vu}\Sigma_{uu}^{-1}\Sigma_{uv}
+D_j(u) \cov( \bsx_v,x_j\mid\bsx_u) \cov( x_j,\bsx_v\mid\bsx_u)
\bigr)\beta_v\\
&=
\beta_{-u}^\tran\cov(\bsx_{-u}\mid\bsx_u)\beta_{-u}-\beta_v^\tran\cov(\bsx_v\mid\bsx_u)
\beta_v\\
&\phe+D_j(u) \beta_v^\tran\cov( \bsx_v,x_j\mid\bsx_u) \cov( x_j,\bsx_v\mid\bsx_u)\beta_v\\
&=\Sigma_{jj}\beta_j^2+\beta_j\Sigma_{jv}\beta_v+\beta_v^\tran\Sigma_{vj}\beta_j\\
&\phe -\beta_j^2\Sigma_{ju}\Sigma_{uu}^{-1}\Sigma_{uj}
-\beta_j\Sigma_{ju}\Sigma_{uu}^{-1}\Sigma_{uv}\beta_v
-\beta_v^\tran\Sigma_{vu}\Sigma_{uu}^{-1}\Sigma_{uj}\beta_j\\
&\phe+D_j(u) \beta_v^\tran\cov( \bsx_v,x_j\mid\bsx_u) \cov( x_j,\bsx_v\mid\bsx_u)\beta_v\\
&=\beta_j^2\var(x_j\mid\bsx_u) + 2\beta_j\cov(x_j,\bsx_v\mid\bsx_u)\beta_v\\
&\phe
+D_j(u) \beta_v^\tran\cov( \bsx_v,x_j\mid\bsx_u) \cov( x_j,\bsx_v\mid\bsx_u)\beta_v.
\end{align*}
Putting this together, the Shapley value of variable $j$ is
\begin{equation}\label{eq:oldexpr}
\begin{split}
\phi_j 
&=
\frac1d\sum_{u\subseteq-j}
{d-1\choose |u|}^{-1}\Bigl(
\beta_j^2\var(x_j\mid\bsx_u) + 2\beta_j\cov(x_j,\bsx_{-u-j}\mid\bsx_u)\beta_{-u-j}\\
&\phe +
\var(x_j\mid\bsx_u)^{-1}
\beta_{-u-j}^\tran\cov( \bsx_{-u-j},x_j\mid\bsx_u) \cov( x_j,\bsx_{-u-j}\mid\bsx_u)\beta_{-u-j}\Bigr). 
\end{split}
\end{equation}

Writing
\begin{align*}
\cov(x_j,\bsx_{-u}^\tran\beta_{-u}\mid\bsx_u)
&=\cov(x_j,\bsx_{-u-j}^\tran\beta_{-u-j}\mid\bsx_u)+\beta_j\var(x_j\mid\bsx_u)
\end{align*}
we then find that
$\cov(x_j,\bsx_{-u}^\tran\beta_{-u}\mid\bsx_u)^2/\var(x_j\mid\bsx_u)$
equals the factor to the right of ${d-1\choose |u|}$ in~\eqref{eq:oldexpr}.

\subsection{Proof of Theorem~\ref{thm:turtle}}\label{sec:turtleproof}

Without loss of generality take $y_0=0$.
Then $\mu = p_1y_1 +p_2y_2$
and $\sigma^2  = p_1y_1^2+p_2y_2^2-\mu^2$.

Now with $y_0=0$,
\begin{align*}
\var(\e(  y\mid x_1)) 
&= (p_0+p_2)\Bigl(
\dfrac{y_2p_2}{p_0+p_2}-\mu\Bigr)^2 + 
p_1(y_1-\mu)^2\\
&= (1-p_1)\Bigl(
\dfrac{y_2p_2}{1-p_1}-\mu\Bigr)^2 + 
p_1(y_1-\mu)^2\\
&= \frac{p_2^2y_2^2}{1-p_1}
-2\mu y_2p_2+\mu^2(1-p_1)
+p_1(y_1-\mu)^2\\
&= \frac{p_2^2y_2^2}{1-p_1}
-2
(p_1y_1 +p_2y_2)y_2p_2+
(p_1y_1 +p_2y_2)^2(1-p_1)
+p_1(y_1(1-p_1)-p_2y_2)^2\\
&=
y_2^2\Bigl( 
\frac{p_2^2}{1-p_1} - 2p_2^2 + p_2^2(1-p_1) + p_1p_2^2 
\Bigr)\\
&\quad+
y_1^2\Bigl( p_1^2(1-p_1) + p_1(1-p_1)^2 
\Bigr)\\
&\quad+
y_1y_2\Bigl( 
-2p_1p_2 + 2p_1p_2(1-p_1) - 2p_1p_2(1-p_1) 
 \Bigr)\\
&=y_2^2\Bigl( \frac{p_2^2}{1-p_1} - p_2^2\Bigr)
+y_1^2p_1(1-p_1)-2y_1y_2 p_1p_2\\
&=y_2^2\frac{p_1p_2^2}{1-p_1}+y_1^2p_1(1-p_1)-2y_1y_2 p_1p_2.
\end{align*}

Then
$\var(\e(  y\mid x_1)) -\var(\e(  y\mid x_2)) $
equals
\begin{align*}
&\quad y_2^2\frac{p_1p_2^2}{1-p_1}
+y_1^2p_1(1-p_1)
-y_1^2\frac{p_2p_1^2}{1-p_2}
-y_2^2p_2(1-p_2)\\
&=
y_2^2\Bigl(
\frac{p_1p_2^2}{1-p_1} - p_2(1-p_2) 
\Bigr) 
+y_1^2\Bigl(
p_1(1-p_1)-\frac{p_2p_1^2}{1-p_2}
\Bigr)\\
&=
y_1^2\Bigl(\frac{p_0p_1}{1-p_2}\Bigr) -y_2^2\Bigl(\frac{p_0p_2}{1-p_1}\Bigr) .
\end{align*}

Finally, the relative importance of variable $x_1$ is 
\begin{align*}
\frac12\Bigl(1+
\frac{y_1^2\bigl(\frac{p_0p_1}{1-p_2}\bigr) -y_2^2\bigl(\frac{p_0p_2}{1-p_1}\bigr) }
{\sigma^2}\Bigr)
&=
\frac12\Bigl(1+\frac{p_0}{\sigma^2}
\frac{y_1^2p_1(1-p_1) -y_2^2p_2(1-p_2)}
{(1-p_1)(1-p_2)}\Bigr)\\
&=
\frac12\Bigl(1+\frac{p_0}{\sigma^2}
\Bigl(\frac{p_1y_1^2}{1-p_2}-\frac{p_2y_2^2}{1-p_1}\Bigr)\Bigr).
\end{align*}

\subsection{Proof of Theorem~\ref{thm:exprv}}\label{sec:proveexprv}
Recall that the random vector $\bsx\in[0,\infty)^d$ has independent 
components $x_j$. They are exponentially 
distributed and $\e(x_j)=1/\lambda_j$ for $0<\lambda_j<\infty$. 
Let $M_u = \max_{j\in u}x_j$ and define value 
$\val(u) = \e(M_u)$. 
Our first step is to evaluate the expected value of
a maximum of independent not identically distributed exponential
random variables.

\begin{proposition}\label{prop:emu}
$$\e(M_u) =
\sum_{\emptyset\ne v\subseteq u}(-1)^{|v|-1}\frac1{\sum_{j\in v}\lambda_j}.$$
\end{proposition}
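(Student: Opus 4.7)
The plan is to use the tail formula $\e(M_u) = \int_0^\infty \Pr(M_u > t)\rd t$ together with inclusion--exclusion on the CDF of the maximum. This converts expectation of a max into a sum of expectations of ordinary exponentials whose rates are sums of subsets of the $\lambda_j$, which is exactly the right-hand side.

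First I would write
\[
\Pr(M_u \le t) = \prod_{j\in u}\Pr(x_j \le t) = \prod_{j\in u}(1-e^{-\lambda_j t}),
\]
using independence of the $x_j$. Expanding the product gives
\[
\prod_{j\in u}(1-e^{-\lambda_j t}) = \sum_{v\subseteq u}(-1)^{|v|}e^{-t\sum_{j\in v}\lambda_j},
\]
so that
\[
\Pr(M_u > t) = 1 - \Pr(M_u \le t) = \sum_{\emptyset\ne v\subseteq u}(-1)^{|v|-1}e^{-t\sum_{j\in v}\lambda_j}.
\]

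Then I would integrate termwise: since all $\lambda_j>0$ the sum has only finitely many terms and each $\int_0^\infty e^{-t\sum_{j\in v}\lambda_j}\rd t = 1/\sum_{j\in v}\lambda_j$ is finite, so Fubini (or simply linearity over a finite sum) gives
\[
\e(M_u) = \int_0^\infty \Pr(M_u>t)\rd t = \sum_{\emptyset\ne v\subseteq u}(-1)^{|v|-1}\frac{1}{\sum_{j\in v}\lambda_j},
\]
which is the claimed identity.

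There is no real obstacle here; the only thing to be careful about is the sign bookkeeping when moving the $v=\emptyset$ term (which contributes $1$) out of the sum, and verifying that termwise integration is legitimate, which it is because the expansion is a finite alternating sum of integrable exponentials. This proposition will then feed directly into the proof of Theorem~\ref{thm:exprv} by substituting $\val(u) = \e(M_u)$ into the Shapley formula and rearranging the resulting double sum over $u\subseteq -\{j\}$ and $v\subseteq u+\{j\}$ by the size $r=|w|$ of $w=v$ (collecting contributions into sets $w$ containing $j$).
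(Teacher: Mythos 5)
Your proof is correct and is essentially the same as the paper's: both use the tail formula $\e(M_u)=\int_0^\infty \Pr(M_u>t)\rd t$, expand $\prod_{j\in u}(1-e^{-\lambda_j t})$ by inclusion--exclusion, and integrate the resulting finite alternating sum term by term. Your write-up is just slightly more explicit about justifying the termwise integration.
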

\begin{proof}
First $\Pr( M_u  < x ) = \prod_{j\in u}\Pr(x_j<x) = \prod_{j\in u}(1-e^{-\lambda_jx})$. Then,
\begin{align*}
\e( M_u ) 
&= \int_0^\infty\left(1-\prod_{j\in u}(1-e^{-\lambda_jx})\right) \rd x\\
&= \int_0^\infty \left( 1-\sum_{v\subseteq u}(-e^{-\lambda_jx})\right) \rd x\\
&= \sum_{\emptyset\ne v\subseteq u}(-1)^{|v|-1}\int_0^\infty e^{-x\sum_{j\in v}\lambda_j}\rd x\\
&= \sum_{\emptyset\ne v\subseteq u}(-1)^{|v|-1}\frac1{\sum_{j\in v}\lambda_j}. 
\qedhere 
\end{align*}
\end{proof}

Using Proposition~\ref{prop:emu} we get Shapley value 
\begin{align*}
\phi_j 
& = \frac1d\sum_{u\subseteq -\{j\}}{d-1\choose |u|}^{-1}(\val(u+j)-\val(u))\\
& = \frac1d\sum_{u\subseteq -\{j\}}{d-1\choose |u|}^{-1}
\Biggl(\sum_{\emptyset\ne v\subseteq u+j}(-1)^{|v|-1}\frac1{\sum_{\ell\in v}\lambda_\ell}
-\sum_{\emptyset\ne v\subseteq u}(-1)^{|v|-1}\frac1{\sum_{\ell \in v}\lambda_\ell}\Biggr)\\
& = \frac1d\sum_{u\subseteq -\{j\}}{d-1\choose |u|}^{-1}
\sum_{w\subseteq u}(-1)^{|w|}\frac1{\sum_{\ell\in w+j}\lambda_\ell}. 
\end{align*}
Introducing the `slack variable' $v$ with $u=v+w$,
\begin{align*}
\phi_j 
 & = \frac1d\sum_{w\subseteq -\{j\}}
(-1)^{|w|}\frac1{\sum_{\ell\in w+j}\lambda_\ell}
\sum_{v\subseteq -\{j\}-w}{d-1\choose |v+w|}^{-1}\\
& = \frac1d\sum_{w\subseteq -\{j\}}
(-1)^{|w|}\frac1{\sum_{\ell\in w+j}\lambda_\ell}
\sum_{r=0}^{d-1-|w|}{d-1-|w| \choose r}\Bigm/{d-1\choose r+|w|}\\
& = \frac1d\sum_{w{:}j\in w}
(-1)^{|w|-1}\frac1{\sum_{\ell\in w}\lambda_\ell}
\sum_{r=0}^{d-|w|}{d-|w| \choose r}\Bigm/{d-1\choose r-1+|w|}. 
\end{align*}
The following diagonal sum identity for binomial 
coefficients will be useful:
$$
\sum_{r=0}^A{L+r\choose r} = {A+L+1\choose A}. 
$$
Using that identity at the third step below,
\begin{align*}
\sum_{r=0}^{d-|w|}{d-|w| \choose r}\Bigm/{d-1\choose r-1+|w|}
&=
\sum_{r=0}^{d-|w|}\frac{ (d-|w|)!}{r!}
\Bigm/\frac{ (d-1)!}{(r-1+|w|)!}\\
&=\frac{(d-|w|)!}{(d-1)!}(|w|-1)!\sum_{r=0}^{d-|w|}{r-1+|w|\choose r}\\
&=\frac{(d-|w|)!}{(d-1)!}(|w|-1)!{d \choose d-|w|}\\
&=\frac{d}{|w|}. 
\end{align*}
As a result,
$$\phi_j = \sum_{w{:}j\in w}
\frac1{|w|} (-1)^{|w|-1}
\frac1{\sum_{\ell\in w}\lambda_\ell}$$
which after slight rearrangement gives the conclusion
of Theorem~\ref{thm:exprv}.
\end{document}